\renewcommand{\thesubfigure}{\thefigure.\arabic{subfigure}}
\renewcommand{\p@subfigure}{}
\renewcommand{\@thesubfigure}{\thesubfigure:\hskip\subfiglabelskip}
\DeclareFontFamily{U}{tipa}{}
\DeclareFontShape{U}{tipa}{bx}{n}{<->tipabx10}{}
\newcommand{\arc@char}{{\usefont{U}{tipa}{bx}{n}\symbol{62}}}%
\newcommand{\arc}[1]{\mathpalette\arc@arc{#1}}
\newcommand{\arc@arc}[2]{%
  \sbox0{$\m@th#1#2$}%
  \vbox{
    \hbox{\resizebox{\wd0}{\height}{\arc@char}}
    \nointerlineskip
    \box0
  }%
}
\newcommand{\doublewedge}{\big@doubleop{\wedge}}
\newcommand{\big@doubleop}[1]{%
  \DOTSB\mathop{\mathpalette\big@doubleop@aux{#1}}\slimits@
}
\newcommand\big@doubleop@aux[2]{%
  \sbox\z@{$\m@th#1#2$}%
  \makebox[1.35\wd\z@][s]{$\m@th#1#2\hss#2$}%
}
\newcommand{\norm}[1]{\left\|#1\right\|}  
\newcommand{\rb}{\mbox{rb}}
\newcommand{\rbx}{\mbox{rbx}}
\newcommand{\rbNrv}{\mbox{rbNrv}}
\newcommand{\dxnear}{\delta_{\norm{\Phi}}} 
\newcommand{\xnear}{\norm{\Phi}} 
\newcommand{\cl}{\mbox{cl}}
\newcommand{\Int}{\mbox{int}}
\newcommand{\bdy}{\mbox{bdy}}
\newcommand{\Nrv}{\mbox{Nrv}}
\newcommand{\dnear}{\delta_{\Phi}} 
\newcommand{\dcap}{\mathop{\cap}\limits_{\Phi}} 
\newcommand{\dxcap}{\mathop{\cap}\limits_{\norm{\Phi}}} 
\newcommand{\sh}{\mbox{sh}}
\newcommand{\cx}{\mbox{cx}}
\newcommand{\cyc}{\mbox{cyc}}
\newcommand{\vNrv}{\mbox{vNrv}}
\newtheorem{example}{Example}
\newtheorem{remark}{Remark}
\newtheorem{definition}{Definition}
\newtheorem{lemma}{Lemma}
\newtheorem{theorem}{Theorem}
\newtheorem{corollary}{Corollary}
\definecolor{light}{gray}{0.80}
\begin{document}

\title[Ribbon Complexes and Approximate Proximities]{Ribbon Complexes\ \&\ their Approximate Descriptive Proximities.\\   Ribbon\ \&\ Vortex Nerves, Betti Numbers and Planar Divisions}

\author[James F. Peters]{James F. Peters}
\address{
Computational Intelligence Laboratory,
University of Manitoba, WPG, MB, R3T 5V6, Canada and
Department of Mathematics, Faculty of Arts and Sciences, Ad\.{i}yaman University, 02040 Ad\.{i}yaman, Turkey}
\thanks{The research has been supported by the Natural Sciences \&
Engineering Research Council of Canada (NSERC) discovery grant 185986 
and Instituto Nazionale di Alta Matematica (INdAM) Francesco Severi, Gruppo Nazionale per le Strutture Algebriche, Geometriche e Loro Applicazioni grant 9 920160 000362, n.prot U 2016/000036 and Scientific and Technological Research Council of Turkey (T\"{U}B\.{I}TAK) Scientific Human
Resources Development (BIDEB) under grant no: 2221-1059B211301223.}

\subjclass[2010]{54E05 (Proximity); 55R40 (Homology); 68U05 (Computational Geometry)}

\date{}

\dedicatory{Dedicated to Enrico Betti and Som Naimpally}

\begin{abstract}
This article introduces planar ribbons, Vergili ribbon complexes and ribbon nerves in Alexandroff-Hopf-Whitehead CW (Closure finite Weak) topological spaces.    A {\em planar ribbon} (briefly, {ribbon}) in a CW space is the closure of a pair of nesting, non-concentric filled cycles that includes boundary but does not include the interior of the inner cycle.  Each planar ribbon has its own distinctive shape determined by its outer and inner boundaries and the interior within its boundaries. A {\em Vergili ribbon complex} (briefly, {\em ribbon complex}) in a CW space is a non-void collection of countable planar ribbons.  A {\em ribbon nerve} is a nonvoid collection of planar ribbons (members of a ribbon complex) that have nonempty intersection.  A planar CW space is a  non-void collection of cells (vertexes, edges and filled triangles) that may or may not be attached to each other and which satisfy Alexandroff-Hopf-Whitehead containment and intersection conditions. In the context of CW spaces, planar ribbons, ribbon complexes, ribbon nerves and vortex nerves are characterized by Betti numbers derived from standard Betti numbers $\mathcal{B}_0$ (cell count), $\mathcal{B}_1$ (cycle count) and $\mathcal{B}_2$ (hole count), namely, $\mathcal{B}_{rb}$ and $\mathcal{B}_{rbNrv}$ introduced in this paper.  Results are given for collections of ribbons and ribbon nerves in planar CW spaces equipped with an approximate descriptive proximity, division of the plane into three bounded regions by a ribbon and Brouwer fixed points on ribbons.  In addition, the homotopy type of ribbon nerves are introduced.   
\end{abstract}
\keywords{Betti Number, CW space, Homotopy Type, Approximate Descriptive Proximity, Ribbon, Ribbon Nerve, Vergili ribbon complex, Vortex Nerves}

\maketitle
\tableofcontents

\section{Introduction}
This paper introduces planar ribbons and ribbon complexes in a CW topological space $K$. A \emph{cell complex} is a nonempty collection of cells.   A \emph{cell} in the Euclidean plane is either a 0-cell (vertex) or 1-cell (edge) or 2-cell (filled triangle).   A nonvoid collection of cell complexes $K$ has a \emph{Closure finite Weak} (CW) topology, provided $K$ is Hausdorff (every pair of distinct cells is contained in disjoint neighbourhoods~\cite[\S 5.1, p. 94]{Naimpally2013}) and the collection of cell complexes in $K$ satisfy the Alexandroff-Hopf-Whitehead~\cite[\S III, starting on page 124]{AlexandroffHopf1935Topologie},~\cite[pp. 315-317]{Whitehead1939homotopy}, ~\cite[\S 5, p. 223]{Whitehead1949BAMS-CWtopology} conditions, namely, containment (the closure of each cell complex is in $K$) and intersection (the nonempty intersection of cell complexes is in $K$).

\begin{figure}[!ht]
\centering
\subfigure[Planar ribbon $\rb E$, {\textcolor{gray!80}{\Large $\boldsymbol{\bullet}$}} = hole]
 {\label{fig:rbE}
\begin{pspicture}
(-1.5,-0.5)(4.0,4.0)
\psframe[linewidth=0.75pt,linearc=0.25,cornersize=absolute,linecolor=blue](-1.25,-0.25)(3.25,4)
\psline*[linestyle=solid,linecolor=green!30]%
(0,0)(1,0.5)(2.0,0.0)(3.0,0.5)(3.0,1.5)(2.0,2.0)(1,1.5)(0,2)
(-1,1.5)(-1,0.5)(0,0)
\psline[linestyle=solid,linecolor=black]%
(0,0)(1,0.5)(2.0,0.0)(3.0,0.5)(3.0,1.5)(2.0,2.0)(1,1.5)(0,2)
(-1,1.5)(-1,0.5)(0,0)
\psdots[dotstyle=o,dotsize=2.2pt,linewidth=1.2pt,linecolor=black,fillcolor=gray!80]%
(0,0)(1,0.5)(2.0,0.0)(3.0,0.5)(3.0,1.5)(2.0,2.0)(1,1.5)(0,2)
(-1,1.5)(-1,0.5)(0,0)
\psline[linestyle=solid](1,1.5)(2,2)\psline[arrows=<->](-1,1.7)(-0.3,2.0)\psline[arrows=<->](0.2,2.05)(0.8,1.75)
\psline*[linestyle=solid,linecolor=white]%
(0,0.25)(1,0.75)(2.0,0.25)(2.5,0.5)(2.5,0.75)(2.0,1.35)(1,1.25)(0,1.5)
(-.55,1.25)(-.55,0.75)(0,0.25)
\psline[linestyle=solid,linecolor=black]%
(0,0.25)(1,0.75)(2.0,0.25)(2.5,0.5)(2.5,0.75)(2.0,1.35)(1,1.25)(0,1.5)
(-.55,1.25)(-.55,0.75)(0,0.25)
\psdots[dotstyle=o,dotsize=2.2pt,linewidth=1.2pt,linecolor=black,fillcolor=gray!80]%
(0,0.25)(1,0.75)(2.0,0.25)(2.5,0.5)(2.5,0.75)(2.0,1.35)(1,1.25)(0,1.5)
(-.55,1.25)(-.55,0.75)(0,0.25)
\psdots[dotstyle=o,dotsize=5.2pt,linewidth=1.2pt,linecolor=black,fillcolor=gray!90]%
(-.80,1.05)(2.80,0.55)
\rput(-1.0,3.75){\footnotesize $\boldsymbol{K}$}
\rput(0.0,1.75){\footnotesize $\boldsymbol{rb E}$}
\rput(2.8,1.85){\footnotesize $\boldsymbol{cyc A}$}
\rput(2.5,1.25){\footnotesize $\boldsymbol{cyc B}$}
\end{pspicture}}\hfil
\subfigure[Ribbon nerve rbNrv K, {\textcolor{gray!80}{\Large $\boldsymbol{\bullet}$}} = hole]
 {\label{fig:rbNrvE}
\begin{pspicture}
(-1.5,-0.5)(4.0,4.0)
\psframe[linewidth=0.75pt,linearc=0.25,cornersize=absolute,linecolor=blue](-1.25,-0.25)(3.75,4)
\psline*[linestyle=solid,linecolor=green!30]%
(0,0)(1,0.5)(2.0,0.0)(3.0,0.5)(3.0,1.5)(2.0,2.0)(1,1.5)(0,2)
(-1,1.5)(-1,0.5)(0,0)
\psline[linestyle=solid,linecolor=black]%
(0,0)(1,0.5)(2.0,0.0)(3.0,0.5)(3.0,1.5)(2.0,2.0)(1,1.5)(0,2)
(-1,1.5)(-1,0.5)(0,0)
\psdots[dotstyle=o,dotsize=2.2pt,linewidth=1.2pt,linecolor=black,fillcolor=gray!80]%
(0,0)(1,0.5)(2.0,0.0)(3.0,0.5)(3.0,1.5)(2.0,2.0)(1,1.5)(0,2)
(-1,1.5)(-1,0.5)(0,0)
\psline[linestyle=solid](1,1.5)(2,2)\psline[arrows=<->](-1,1.7)(-0.3,2.0)\psline[arrows=<->](0.2,2.05)(0.8,1.75)
\psline*[linestyle=solid,linecolor=white]%
(0,0.25)(1,0.75)(2.0,0.25)(2.5,0.5)(2.5,1.25)(2.0,1.55)(1,1.25)(0,1.5)
(-.55,1.25)(-.55,0.75)(0,0.25)
\psline[linestyle=solid,linecolor=black]%
(0,0.25)(1,0.75)(2.0,0.25)(2.5,0.5)(2.5,1.25)(2.0,1.55)(1,1.25)(0,1.5)
(-.55,1.25)(-.55,0.75)(0,0.25)
\psdots[dotstyle=o,dotsize=2.2pt,linewidth=1.2pt,linecolor=black,fillcolor=gray!80]%
(0,0.25)(1,0.75)(2.0,0.25)(2.5,0.5)(2.5,1.25)(2.0,1.55)(1,1.25)(0,1.5)
(-.55,1.25)(-.55,0.75)(0,0.25)
\psline*[linestyle=solid,linecolor=orange!35]%
(2,2)(1,2.25)(1.0,3.55)(2.25,3.85)(3.5,3.25)(3.5,2.25)(2,2)
\psline[linestyle=solid,linecolor=black]%
(2,2)(1,2.25)(1.0,3.55)(2.25,3.85)(3.5,3.25)(3.5,2.25)(2,2)
\psdots[dotstyle=o,dotsize=2.2pt,linewidth=1.2pt,linecolor=black,fillcolor=gray!80]%
(2,2)(1,2.25)(1.0,3.55)(2.25,3.85)(3.5,3.25)(3.5,2.25)(2,2)
\psdots[dotstyle=o,dotsize=2.5pt,linewidth=1.2pt,linecolor=black,fillcolor=red!80]%
(2,2)
\psline*[linestyle=solid,linecolor=white]%
(2,2.25)(1.25,2.5)(1.25,3.0)(2.25,3.25)(3.25,3.0)(3.25,2.35)(2,2.25)
\psline[linestyle=solid,linecolor=black]%
(2,2.25)(1.25,2.5)(1.25,3.0)(2.25,3.25)(3.25,3.0)(3.25,2.35)(2,2.25)
\psdots[dotstyle=o,dotsize=2.2pt,linewidth=1.2pt,linecolor=black,fillcolor=gray!80]%
(2,2.25)(1.25,2.5)(1.25,3.0)(2.25,3.25)(3.25,3.0)(3.25,2.35)(2,2.25)
\psdots[dotstyle=o,dotsize=5.2pt,linewidth=1.2pt,linecolor=black,fillcolor=gray!90]%
(-.80,1.05)(2.80,0.85)(2.30,3.41)(2.50,3.61)(2.80,3.31)
\rput(-1.0,3.75){\footnotesize $\boldsymbol{K'}$}
\rput(0.0,1.75){\footnotesize $\boldsymbol{rb A}$}
\rput(2.0,1.85){\footnotesize $\boldsymbol{a}$}
\rput(1.5,3.35){\footnotesize $\boldsymbol{rb B}$}
\end{pspicture}}
\caption[]{Sample planar ribbon structure and ribbon nerve structure}
\label{fig:vortexCycles}
\end{figure}

\begin{definition} Planar Ribbon.\\
Let $\cyc A, \cyc B$ be nesting filled cycles (with $\cyc B$ in the interior of $\cyc A$) defined on a collection of vertices $E$ on a finite, bounded, planar region in a CW space $K$. A \emph{planar ribbon} $E$ (denoted by $\rb E$) is defined by
\[
\rb E = \overbrace{
\left\{\cl(\cyc A)\setminus \left\{\cl(\cyc B)\setminus \Int(\cyc B)\right\}: \bdy(\cyc B)\subset \cl(\rb E)\right\}.}^{\mbox{\textcolor{blue}{\bf $\bdy(\cl(\cyc B))$ defines the inner boundary of $\cl(\rb E)$.}}}\mbox{\textcolor{blue}{\Squaresteel}}
\]
\end{definition}


\begin{figure}[!ht]
\centering
\begin{pspicture}
(-1.5,-0.5)(8.0,4.0)
\psframe[linewidth=0.75pt,linearc=0.25,cornersize=absolute,linecolor=blue](-1.25,-0.25)(7.25,4)
\psline*[linestyle=solid,linecolor=green!30]%
(0,0)(1,0.5)(2.0,0.0)(3.0,0.5)(3.0,1.5)(2.0,2.0)(1,1.5)(0,2)
(-1,1.5)(-1,0.5)(0,0)
\psline[linestyle=solid,linecolor=black]%
(0,0)(1,0.5)(2.0,0.0)(3.0,0.5)(3.0,1.5)(2.0,2.0)(1,1.5)(0,2)
(-1,1.5)(-1,0.5)(0,0)
\psdots[dotstyle=o,dotsize=2.2pt,linewidth=1.2pt,linecolor=black,fillcolor=gray!80]%
(0,0)(1,0.5)(2.0,0.0)(3.0,0.5)(3.0,1.5)(2.0,2.0)(1,1.5)(0,2)
(-1,1.5)(-1,0.5)(0,0)
\psline*[linestyle=solid,linecolor=white]%
(0,0.25)(1,0.75)(2.0,0.25)(2.5,0.5)(2.5,1.25)(2.0,1.55)(1,1.25)(0,1.5)
(-.55,1.25)(-.55,0.75)(0,0.25)
\psline[linestyle=solid,linecolor=black]%
(0,0.25)(1,0.75)(2.0,0.25)(2.5,0.5)(2.5,1.25)(2.0,1.55)(1,1.25)(0,1.5)
(-.55,1.25)(-.55,0.75)(0,0.25)
\psdots[dotstyle=o,dotsize=2.2pt,linewidth=1.2pt,linecolor=black,fillcolor=gray!80]%
(0,0.25)(1,0.75)(2.0,0.25)(2.5,0.5)(2.5,1.25)(2.0,1.55)(1,1.25)(0,1.5)
(-.55,1.25)(-.55,0.75)(0,0.25)
\psline*[linestyle=solid,linecolor=orange!35]%
(2,2)(1,2.25)(1.0,3.55)(2.25,3.85)(3.5,3.25)(3.5,2.25)(2,2)
\psline[linestyle=solid,linecolor=black]%
(2,2)(1,2.25)(1.0,3.55)(2.25,3.85)(3.5,3.25)(3.5,2.25)(2,2)
\psdots[dotstyle=o,dotsize=2.2pt,linewidth=1.2pt,linecolor=black,fillcolor=gray!80]%
(2,2)(1,2.25)(1.0,3.55)(2.25,3.85)(3.5,3.25)(3.5,2.25)(2,2)
\psdots[dotstyle=o,dotsize=2.5pt,linewidth=1.2pt,linecolor=black,fillcolor=red!80]%
(2,2)
\psline*[linestyle=solid,linecolor=white]%
(2,2.25)(1.25,2.5)(1.25,3.0)(2.25,3.25)(3.25,3.0)(3.25,2.35)(2,2.25)
\psline[linestyle=solid,linecolor=black]%
(2,2.25)(1.25,2.5)(1.25,3.0)(2.25,3.25)(3.25,3.0)(3.25,2.35)(2,2.25)
\psdots[dotstyle=o,dotsize=2.2pt,linewidth=1.2pt,linecolor=black,fillcolor=gray!80]%
(2,2.25)(1.25,2.5)(1.25,3.0)(2.25,3.25)(3.25,3.0)(3.25,2.35)(2,2.25)
\psdots[dotstyle=o,dotsize=5.2pt,linewidth=1.2pt,linecolor=black,fillcolor=gray!90]%
(-.80,1.05)(2.80,0.85)(2.80,1.2)(2.30,3.41)(2.50,3.61)(2.80,3.31)
\psline*[linestyle=solid,linecolor=brown!10]%
(2,2)(1,2.0)(0.0,3.0)(-0.20,3.25)(-1.0,3.25)(-1.0,2.25)
(0.0,2.15)(1.0,1.75)
(2,2)
\psline[linestyle=solid,linecolor=black]%
(2,2)(1,2.0)(0.0,3.0)(-0.20,3.25)(-1.0,3.25)(-1.0,2.25)
(0.0,2.15)(1.0,1.75)
(2,2)
\psdots[dotstyle=o,dotsize=2.2pt,linewidth=1.2pt,linecolor=black,fillcolor=gray!80]%
(2,2)(1,2.0)(0.0,3.0)(-0.20,3.25)(-1.0,3.25)(-1.0,2.25)
(0.0,2.15)(1.0,1.75)
(2,2)
\psdots[dotstyle=o,dotsize=2.5pt,linewidth=1.2pt,linecolor=black,fillcolor=red!80]%
(2,2)
\psline*[linestyle=solid,linecolor=white]%
(-0.85,2.75)(-0.85,2.35)(0.0,2.35)(0.25,2.15)(0.25,2.45)
(-0.85,2.75)
\psline[linestyle=solid,linecolor=black]%
(-0.85,2.75)(-0.85,2.35)(0.0,2.35)(0.25,2.15)(0.25,2.45)
(-0.85,2.75)
\psdots[dotstyle=o,dotsize=2.2pt,linewidth=1.2pt,linecolor=black,fillcolor=gray!80]%
(-0.85,2.75)(-0.85,2.35)(0.0,2.35)(0.25,2.15)(0.25,2.45)
(-0.85,2.75)
\psline*[linestyle=solid,linecolor=brown!10]%
(6,1.75)(6,2.0)(5.0,3.25)(4.5,3.25)(4.5,1.75)(6,1.75)
\psline[linestyle=solid,linecolor=black]%
(6,1.75)(6,2.0)(5.0,3.25)(4.5,3.25)(4.5,1.75)(6,1.75)
\psdots[dotstyle=o,dotsize=2.2pt,linewidth=1.2pt,linecolor=black,fillcolor=gray!80]%
(6,1.75)(6,2.0)(5.0,3.25)(4.5,3.25)(4.5,1.75)(6,1.75)
\psline*[linestyle=solid,linecolor=white]%
(5.75,2.0)(5,2.75)(4.75,2.75)(4.75,2.0)(5.75,2.0)
\psline[linestyle=solid,linecolor=black]%
(5.75,2.0)(5,2.75)(4.75,2.75)(4.75,2.0)(5.75,2.0)
\psdots[dotstyle=o,dotsize=2.2pt,linewidth=1.2pt,linecolor=black,fillcolor=gray!80]%
(5.75,2.0)(5,2.75)(4.75,2.75)(4.75,2.0)(5.75,2.0)
\psline*[linestyle=solid,linecolor=green!10]%
(7,0.25)(7,1.25)(4.0,1.25)(4.0,0.25)(7,0.25)
\psline[linestyle=solid,linecolor=black]%
(7,0.25)(7,1.25)(4.0,1.25)(4.0,0.25)(7,0.25)
\psdots[dotstyle=o,dotsize=2.2pt,linewidth=1.2pt,linecolor=black,fillcolor=gray!80]%
(7,0.25)(7,1.25)(4.0,1.25)(4.0,0.25)(7,0.25)
\psline*[linestyle=solid,linecolor=white]%
(6.5,0.35)(6.25,1.0)(5.5,0.75)(4.5,1.0)(4.5,0.35)(5.5,0.45)(6.5,0.35)
\psline[linestyle=solid,linecolor=black]%
(6.5,0.35)(6.25,1.0)(5.5,0.75)(4.5,1.0)(4.5,0.35)(5.5,0.45)(6.5,0.35)
\psdots[dotstyle=o,dotsize=2.2pt,linewidth=1.2pt,linecolor=black,fillcolor=gray!80]%
(6.5,0.35)(6.25,1.0)(5.5,0.75)(4.5,1.0)(4.5,0.35)(5.5,0.45)(6.5,0.35)
\rput(-0.85,3.75){\footnotesize $\boldsymbol{rbx K}$}
\rput(0.0,1.75){\footnotesize $\boldsymbol{rb A}$}
\rput(5.5,1.05){\footnotesize $\boldsymbol{rb A'}$}
\rput(0.3,3.5){\footnotesize $\boldsymbol{rbNrv K}$}
\rput(2.0,1.85){\footnotesize $\boldsymbol{a}$}
\rput(1.5,3.35){\footnotesize $\boldsymbol{rb B}$}
\rput(-0.50,3.00){\footnotesize $\boldsymbol{rb B'}$}
\rput(4.85,2.95){\footnotesize $\boldsymbol{rb B''}$}
\end{pspicture}
\caption[]{Sample ribbon complex $\rbx K$}
\label{fig:ribbonComplex}
\end{figure}

\begin{example} 
A planar ribbon $\rb E$ is shown in Fig.~\ref{fig:rbE} on a pair of nested filled cycles $\cyc A, \cyc B$ with cycle $\cyc B$ in the interior of cycle $\cyc A$ and the interior $\Int(\cl(\cyc B))$ is removed (not included) in the interior of cycle $\cyc A$. The white region in the interior of $\cyc A$ in Fig.~\ref{fig:rbE} represents the interior of $\cyc B$ not included in $\Int(\cyc A)$.  
\textcolor{blue}{\Squaresteel}
\end{example} 

\begin{definition}\label{def:ribbonComplex} \emph{Vergili Ribbon Complex}\\ 
Let $2^{VertK}$ denote the collection of subsets of vertices in a CW space $K$.  A Vergili\footnote{The structure of a ribbon complex was suggested by T. Vergili~\cite{Vergili2019personalComm}.} ribbon complex $K$ (denoted by $\rbx K$) is a non-void collection of countable planar ribbons in a CW space, {\em i.e.},
\[
\rbx K = \left\{\rb E:E\in 2^{VertK}\right\}.\mbox{\textcolor{blue}{\Squaresteel}}
\]
\end{definition}

\begin{example} 
Examples of Vergili ribbon complexes are given
in
\begin{compactenum}[1$^o$]
\item Fig.~\ref{fig:rbE}:
$\rbx K = \left\{\rb E\right\}$
\item Fig.~\ref{fig:rbNrvE}:
$\rbx K = \left\{\rb A,\rb B\right\}$
\item Fig.~\ref{fig:ribbonComplex}: 
$\rbx K = \left\{\rb A,\rb B',\rb B,\rb A',\rb B''\right\}$
\textcolor{blue}{\Squaresteel}
\end{compactenum} 
\end{example} 

A \emph{filled planar cycle} $A$ (denoted by $\cyc A$) has a nonempty interior with a boundary containing a non-void finite, collection $E$ of path-connected vertices so that there is a path between any pair of vertices in $\cyc A$.  The interior of the inner cycle in a ribbon is excluded from the ribbon. The outer and inner boundaries of a ribbon are simple, closed curves. A \emph{simple curve} has no self-intersections (loops).  A \emph{closed curve} begins and ends in the same vertex for each vertex in the curve.  A \emph{filled planar cycle} includes that part of the plane inside the cycle boundary.  A pair of cycles are \emph{nesting}, provided one cycle contains the other cycle entirely within its interior.  Let $\rb E$ be a planar ribbon.
The closure of a ribbon $E$ (denoted by $\cl(\rb E)$) includes its boundary (denoted by $\bdy(\rb E)$) and its interior (denoted by $\Int(\rb E)$).  The boundary of a filled cycle $\cyc A$ with the cycle interior excluded is denoted by $\cl(\cyc A)\setminus \Int(\cyc A)$. 

\begin{remark}
A planar ribbon is analogous to a Brooks ribbon~\cite[\S 1.2]{Brooks2013wideRibbons}, whose boundaries are a pair of simple, smoothly closed curves.  A ribbon structure is also analogous to a vortex tube, which is a collection of lines that form a vortex surface or vector tube~\cite[\S 1.3, p. 7]{Cottet2000CUPvortexMethods}.  An important application of planar ribbons appears in recent work on two-ribbon solar flares by H. He and others~\cite{He2019twoRibbonSolarFlares}. Although not considered here, it is possible to represent a non-void collection of convex ribbons in a ribbon complex as a Klee-Phelps convex groupoid~\cite{Peters2017convexGroupoids}.
\qquad \textcolor{blue}{\Squaresteel}
\end{remark}

A planar ribbon divides the plane into three disjoint open sets and provides a boundary of each of the three planar regions. (see Theorem~\ref{thm:BrouwerOpenSets}).  In this paper, the focus is on ribbons on a finite, bounded region of the plane.  In that case, a ribbon in the interior of a finite, bounded region of the plane divides the region into three disjoint bounded regions (see Theorem~\ref{thm:ribbonPlanarRegions}).  In addition, a continuous map on a planar ribbon to itself has a fixed point (see the Ribbon Fixed Point Theorem~\ref{thm:ribbonFixedPoint}), which is a straightforward consequence of the Brouwer Fixed Point Theorem.

\begin{definition}\label{def:ribbonNerve}
A \emph{ribbon nerve} $E$ (denoted by $\rbNrv K$) is a non-void collection of planar ribbons (in a ribbon complex $\rbx K$) that have nonempty intersection, {\em i.e.},
\[
\rbNrv K = \left\{\rb E\in \rbx K: \bigcap\rb E\neq\emptyset\right\}.\mbox{\qquad \textcolor{blue}{\Squaresteel}}
\]
\end{definition}


\begin{example} 
Examples of ribbon nerves derived from a ribbon complex $\rbx K$ on a finite bounded planar region in a CW space are given in
\begin{compactenum}[1$^o$]
\item Fig.~\ref{fig:rbE}:
$\rbNrv K = \left\{\{\rb E\}\right\}$
\item Fig.~\ref{fig:rbNrvE}:
$\rbNrv K = \left\{\{\rb A,\rb B\}\right\}$
\item Fig.~\ref{fig:ribbonComplex}: 
$\rbNrv K = \left\{\{\rb A,\rb B',\rb B\},\{\rb A'\},\{\rb B''\}\right\}$
\textcolor{blue}{\Squaresteel}
\end{compactenum} 
\end{example} 
   
Planar ribbon nerve complexes are examples of Edelsbrunner-Harer nerves.

\begin{definition}\label{def:EdelsbrunnerHarerNerve}
Let $F$ be a finite, non-void collection of sets.   An \emph{\bf Edelsbrunner-Harer nerve}~\cite[\S III.2, p. 59]{Edelsbrunner1999} consists of all nonempty subcollections of $F$ (denoted by $\Nrv F$) whose sets have nonempty intersection, {\em i.e.},
\[
\Nrv F = \left\{X\subseteq F: \bigcap X\neq \emptyset\right\}.\mbox{\qquad \textcolor{blue}{\Squaresteel}} 
\]
\end{definition}

\begin{theorem}
A ribbon nerve is an Edelsbrunner-Harer nerve.
\end{theorem}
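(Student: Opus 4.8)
The plan is to show that the ribbon nerve $\rbNrv K$ matches, term for term, the defining form of an Edelsbrunner-Harer nerve $\Nrv F$ once we take $F$ to be the ambient ribbon complex $\rbx K$. The first step is to observe that each planar ribbon $\rb E$ is itself a set: by the Planar Ribbon definition it is the closure of cell complexes (vertices, edges, filled triangles) on a bounded planar region, hence a set of cells on which set-theoretic intersection is well defined. Consequently the ribbon complex $\rbx K = \{\rb E : E \in 2^{VertK}\}$ is a non-void collection of sets, exactly the kind of object $F$ that appears in Definition~\ref{def:EdelsbrunnerHarerNerve}.

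Next I would make the identification $F = \rbx K$ explicit and verify the finiteness hypothesis. Since the focus throughout is on ribbons supported on a finite, bounded region of the plane, the collection of ribbons in $\rbx K$ is finite (each ribbon is determined by a finite vertex set $E \in 2^{VertK}$, and only finitely many such nesting pairs occur on a bounded region), so $F$ is a finite, non-void collection of sets as required by the Edelsbrunner-Harer definition.

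The third step is the direct comparison of the two set-builder expressions. Definition~\ref{def:ribbonNerve} reads $\rbNrv K = \{\rb E \in \rbx K : \bigcap \rb E \neq \emptyset\}$; that is, $\rbNrv K$ collects exactly those subcollections of ribbons drawn from $\rbx K$ whose common intersection is nonempty. Definition~\ref{def:EdelsbrunnerHarerNerve} reads $\Nrv F = \{X \subseteq F : \bigcap X \neq \emptyset\}$. Substituting $F = \rbx K$ and letting $X$ range over subcollections of ribbons, the two conditions coincide: a subcollection $X$ satisfies $\bigcap X \neq \emptyset$ precisely when its ribbons have nonempty intersection. Hence $\rbNrv K = \Nrv(\rbx K)$, so the ribbon nerve is an Edelsbrunner-Harer nerve, and the worked examples following Definition~\ref{def:ribbonNerve} exhibit this equality explicitly.

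I expect the only real friction to be bookkeeping rather than mathematics: the statement in Definition~\ref{def:ribbonNerve} is written with the abbreviated quantifier ``$\rb E \in \rbx K$'' and an unindexed ``$\bigcap \rb E$,'' so the main care is to read this as ranging over subcollections $X \subseteq \rbx K$ (as the examples confirm) and to confirm that nonemptiness of the common intersection is the shared gate-keeping condition in both definitions. Once that reading is fixed and the finiteness of $\rbx K$ on a bounded region is recorded, the theorem follows immediately from matching the two defining formulas.
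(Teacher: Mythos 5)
Your proposal is correct and takes essentially the same approach as the paper: the paper's proof likewise takes $F$ to be a finite collection of ribbons in a CW space, restates the ribbon nerve as $\left\{\rb E\in F:\bigcap \rb E \neq\emptyset\right\}$, and concludes directly from Definition~\ref{def:EdelsbrunnerHarerNerve}. You are in fact more careful than the paper --- explicitly checking finiteness of $\rbx K$ and reconciling the elementwise quantifier ``$\rb E \in \rbx K$'' with the subcollection quantifier ``$X \subseteq F$'' in the Edelsbrunner-Harer definition --- but these are refinements of the same definitional matching, not a different argument.
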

\begin{proof}
Let $F$ be a finite collection of ribbons in a CW space.  Let the ribbon nerve $K$ (denoted by $\rbNrv K$) be defined by
\[
\rbNrv K = \overbrace{\left\{\rb E\in F:\bigcap \rb E \neq\emptyset\right\}.}^{\mbox{\textcolor{blue}{\bf Planar Ribbons in $F$ that have a common part}}}
\] 
Hence, from Def.~\ref{def:EdelsbrunnerHarerNerve}, $\rbNrv K$ is an Edelsbrunner-Harer nerve.
\end{proof}


A partially filled planar ribbon interior contains planar holes.  A \emph{planar hole} is a finite planar region bounded by a simple, closed curve that has an empty interior and cannot be retracted (shrunk) to a point.  That is, the interior of a planar hole contains no cells.  Holes in ribbons are analogous to surface jump discontinuities (gaps in a surface map) commonly found in vortex structures~\cite{Krasny1991jumpDiscontinuities}.

\begin{example}
Ribbon $\rb E$ in Fig.~\ref{fig:vortexCycles}(1.1) contains two holes in the interior of cycle $\cyc A$.  In each case, a hole is represented by an opaque grey region.   Again, for example, ribbon $\rb B$ in Fig.~\ref{fig:vortexCycles}(1.2) contains three holes (opaque gray regions) in its interior.  
\qquad \textcolor{blue}{\Squaresteel}
\end{example} 

Ribbons in a CW complex can be extracted from ordinary vortex nerves~\cite{Peters2019AMSBullVortexNerves}.

\begin{definition}\label{def:vortexNerve}
Let $K$ be a finite CW complex and let $2^{K}$ be the collection of all subsets of cells in $K$. A \emph{\bf vortex nerve} consists of a nonempty collection $E$ of nesting, usually non-concentric filled cycles $\cyc A$ in $K$ (denoted by $\vNrv K$) that have have nonempty intersection and which have zero or more edges (called {\rm \emph{filaments}}) attached between pairs of cycles in $\vNrv K$, {\em i.e.},
\[
\vNrv K = \left\{\cyc A\in 2^{K}: \bigcap \cyc A\neq \emptyset\right\}. \mbox{\qquad \textcolor{blue}{\Squaresteel}}
\]
\end{definition}

\begin{example}
A collection $X$ of filled cycles = $\left\{\cyc A, \cyc A', \cyc B\right\}$ in a CW space $K$ is represented in Fig.~\ref{fig:nerveRibbons}.  In this case, we have
\[
\vNrv K = \left\{\cyc A\in X: \bigcap \cyc A\neq \emptyset\right\}. \mbox{\qquad \textcolor{blue}{\Squaresteel}}
\]
That is, the intersection of all cycles in the collection $X$ is nonempty   Hence, $\vNrv K$ in Fig.~\ref{fig:nerveRibbons} is an Edelsbrunner-Harer nerve. 
\qquad \textcolor{blue}{\Squaresteel}
\end{example}

\begin{theorem}
A ribbon is a vortex nerve.
\end{theorem}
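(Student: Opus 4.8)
The plan is to read off, from an arbitrary ribbon $\rb E$, the pair of nesting cycles on which it is built and to verify that this pair satisfies Definition~\ref{def:vortexNerve}. By the Planar Ribbon definition, $\rb E$ is determined by two nesting filled cycles $\cyc A$ and $\cyc B$, with $\cyc B$ lying in the interior of $\cyc A$. The natural candidate for a vortex nerve is therefore the collection $\{\cyc A,\cyc B\}$, which is nonempty and consists of nesting filled cycles in the CW space $K$.

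First I would verify the nonempty-intersection requirement of a vortex nerve. Since $\cyc B$ sits in the interior of $\cyc A$ and both cycles are filled, the filled inner cycle is contained in the filled outer cycle, so $\cyc A\cap\cyc B=\cyc B$. As $\cyc B$ is a filled cycle with nonempty interior, $\bigcap\{\cyc A,\cyc B\}=\cyc B\neq\emptyset$, which is exactly the condition $\bigcap\cyc A\neq\emptyset$ demanded in Definition~\ref{def:vortexNerve}.

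Next I would address the filaments. Definition~\ref{def:vortexNerve} permits zero or more filaments attached between pairs of cycles, so whether the ribbon carries a connecting edge between $\bdy(\cyc A)$ and $\bdy(\cyc B)$ (as drawn in Fig.~\ref{fig:rbE}) or none at all, the collection $\{\cyc A,\cyc B\}$ still complies. Combining the two observations, $\{\cyc A,\cyc B\}$ is a nonempty collection of nesting filled cycles with nonempty intersection and zero or more filaments; hence, by Definition~\ref{def:vortexNerve}, it is a vortex nerve $\vNrv K$. Since $\rb E$ is defined precisely on this pair of cycles, the ribbon is a vortex nerve.

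The main obstacle is conceptual rather than computational: one must carefully distinguish the ribbon as a point set---the closed annular region $\cl(\cyc A)\setminus\Int(\cyc B)$, from which the inner interior has been removed---from the vortex nerve as a collection of cycles. The identification rests on recovering the two defining cycles from the ribbon and then checking that, viewed as filled cycles, their intersection is the inner cycle and is therefore nonempty, a fact immediate from the nesting condition built into the Planar Ribbon definition.
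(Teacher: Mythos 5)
Your proposal is correct and takes essentially the same route as the paper's own proof: both read off the pair of nesting cycles $\cyc A,\cyc B$ from the ribbon, observe that the nesting condition forces $\bigcap\left\{\cyc A,\cyc B\right\}\neq\emptyset$, and then invoke Definition~\ref{def:vortexNerve}. Your write-up is in fact a bit more careful than the paper's, since you explicitly identify the intersection as the filled inner cycle, note that the zero-or-more-filaments clause is satisfied vacuously, and flag the point-set versus collection-of-cycles distinction that the paper's proof glosses over.
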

\begin{proof}
Let $\rb E$ be a ribbon in planar CW space $K$ containing a pair of nesting, non-concentric cycles $\cyc A, cyc B$ such that the boundary of $\cyc B$ is in the interior of $\cyc A$ and the interior of $\cyc B$ is not included in the interior of $\cyc A$, {\em i.e.}, $\Int(\cl(\cyc A))\supset \bdy(\cl(\cyc B)\setminus \Int(\cyc B))$.  
Consequently, $\cyc A \cap \cyc B\neq \emptyset$.  Then define nerve $\Nrv$ to be
\[
\Nrv K = \left\{\cyc A\in \rb E: \bigcap\cyc A \neq \emptyset\right\}.
\]
Hence, $\Nrv K$ is a vortex nerve.
\end{proof}

\begin{figure}[!ht]
\centering
\begin{pspicture}
(-1.5,-0.5)(4.0,4.0)
\psframe[linewidth=0.75pt,linearc=0.25,cornersize=absolute,linecolor=blue](-1.25,-0.25)(3.25,4)
\psline*[linestyle=solid,linecolor=green!30]%
(0,0)(1,0.5)(2.0,0.0)(3.0,0.5)(3.0,1.8)(2.0,2.0)(1,2.5)(0,2.25)
(-1,1.5)(-1,0.5)(0,0)
\psline[linestyle=solid,linecolor=black]%
(0,0)(1,0.5)(2.0,0.0)(3.0,0.5)(3.0,1.8)(2.0,2.0)(1,2.5)(0,2.25)
(-1,1.5)(-1,0.5)(0,0)
\psdots[dotstyle=o,dotsize=2.2pt,linewidth=1.2pt,linecolor=black,fillcolor=gray!80]%
(0,0)(1,0.5)(2.0,0.0)(3.0,0.5)(3.0,1.8)(2.0,2.0)(1,2.5)(0,2.25)
(-1,1.5)(-1,0.5)(0,0)
\psline*[linestyle=solid,linecolor=white]%
(0,0.25)(1,0.75)(2.0,0.25)(2.5,0.5)(2.5,1.0)(2.0,1.35)(1,1.55)(0,1.85)
(-.55,1.25)(-.55,0.75)(0,0.25)
\psline*[linestyle=solid,linecolor=black]%
(0,0.25)(1,0.75)(2.0,0.25)(2.5,0.5)(2.5,1.0)(2.0,1.35)(1,1.55)(0,1.85)
(-.55,1.25)(-.55,0.75)(0,0.25)
\psdots[dotstyle=o,dotsize=2.2pt,linewidth=1.2pt,linecolor=black,fillcolor=gray!80]%
(0,0.25)(1,0.75)(2.0,0.25)(2.5,0.5)(2.5,1.0)(2.0,1.35)(1,1.55)(0,1.85)
(-.55,1.25)(-.55,0.75)(0,0.25)
\psline*[linestyle=solid,linecolor=white]%
(0,0.35)(1,0.85)(2.0,0.35)(2.3,0.65)(2.3,0.85)(2.0,1.05)(1,1.15)(0,1.25)
(-.35,1.00)(-.35,0.75)(0,0.35)
\psline[linestyle=solid,linecolor=black]%
(0,0.35)(1,0.85)(2.0,0.35)(2.3,0.65)(2.3,0.85)(2.0,1.05)(1,1.15)(0,1.25)
(-.35,1.00)(-.35,0.75)(0,0.35)
\psdots[dotstyle=o,dotsize=2.2pt,linewidth=1.2pt,linecolor=black,fillcolor=gray!90]%
(0,0.35)(1,0.85)(2.0,0.35)(2.3,0.65)(2.3,0.85)(2.0,1.05)(1,1.15)(0,1.25)
(-.35,1.00)(-.35,0.75)(0,0.35)
\rput(-1.0,3.75){\footnotesize $\boldsymbol{K}$}
\rput(1.00,2.70){\footnotesize $\boldsymbol{vNrv K}$}
\rput(0.4,1.98){\footnotesize $\boldsymbol{rb E'}$}
\rput(0.0,1.50){\footnotesize \textcolor{white}{$\boldsymbol{rb E}$}}
\rput(2.8,2.05){\footnotesize $\boldsymbol{cyc B}$}
\rput(2.58,1.30){\footnotesize $\boldsymbol{cyc A'}$}
\rput(1.78,0.85){\footnotesize $\boldsymbol{cyc A}$}s
\end{pspicture}
\caption[]{Vortex nerve}
\label{fig:nerveRibbons}
\end{figure}

In general, a vortex nerve $\vNrv K$ contains $k$ nesting, non-concentric cycles.  The number of cycles $k$ in $\vNrv E$ can be either an even or odd number. By definition, each pair of cycles in $\vNrv K$ closest to each other is a ribbon complex.  It is always the case that ribbons $\rb A,\rb A'$ in ascending order and next to each other in nerve $\vNrv E$ have a common cycle, {\em i.e.}, the outer boundary $\bdy(\rb A')$ is also the inner boundary of $\rb A$.  These observations lead to the following result.

\begin{theorem}\label{thm:vortexNerveRibbons}
A vortex nerve with $k>1$ cycles contains $k-1$ ribbons.
\end{theorem}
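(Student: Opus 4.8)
The plan is to exploit the nesting of the cycles to impose a linear order on them, and then to set up a one-to-one correspondence between the ribbons of the nerve and the adjacent pairs of cycles in that order. First I would observe that, by Definition~\ref{def:vortexNerve}, the $k$ cycles of $\vNrv K$ are pairwise nesting, so the relation ``$\cyc A$ lies in the interior of $\cyc B$'' is transitive and, since nesting makes every pair comparable, it is a strict total order on the $k$ cycles. Relabelling accordingly, write the cycles as $\cyc A_1, \cyc A_2, \dots, \cyc A_k$ with $\cyc A_1$ innermost and $\cyc A_k$ outermost, so that $\bdy(\cyc A_i)\subset \Int(\cyc A_{i+1})$ for each $i$.

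Next I would apply the Planar Ribbon definition to each pair of consecutive cycles. For $1\le i\le k-1$, the pair $(\cyc A_{i+1},\cyc A_i)$, with $\cyc A_{i+1}$ as outer cycle and $\cyc A_i$ as inner cycle, satisfies the hypotheses of that definition and hence determines a ribbon $\rb E_i=\cl(\cyc A_{i+1})\setminus\left(\cl(\cyc A_i)\setminus\Int(\cyc A_i)\right)$. As the index $i$ ranges over $\{1,\dots,k-1\}$, this construction yields $k-1$ ribbons, and the asserted count follows once I check that the assignment $i\mapsto\rb E_i$ is a bijection onto the ribbons of the nerve.

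That verification is the remaining step. In one direction the ribbons $\rb E_1,\dots,\rb E_{k-1}$ are distinct: by the shared-boundary observation recorded just before the theorem, the inner boundary of $\rb E_i$ is $\bdy(\cyc A_i)$ while the inner boundary of $\rb E_{i-1}$ is $\bdy(\cyc A_{i-1})$, and these differ, so no two of the $\rb E_i$ coincide. In the other direction, every ribbon of $\vNrv K$ is one of the $\rb E_i$: a ribbon is the region bounded by two nesting cycles whose interior contains no further bounding cycle, and in the linear order this forces the two cycles to be consecutive, say $\cyc A_j$ and $\cyc A_{j+1}$; a non-consecutive pair $\cyc A_j,\cyc A_m$ with $m>j+1$ would enclose $\cyc A_{j+1}$ in its interior and so could not bound a single ribbon.

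I expect this last point, ruling out ``long'' ribbons spanning non-adjacent cycles, to be the main obstacle, since it relies on the convention (stated in the paragraph preceding the theorem) that the ribbons of a vortex nerve are exactly those determined by the pairs of cycles closest to each other. An alternative route that sidesteps this subtlety is induction on $k$: the base case $k=2$ gives a single ribbon between the two cycles, and adjoining a new outermost cycle $\cyc A_{k+1}$ to a nerve of $k$ cycles creates exactly one new ribbon, namely the ribbon between $\cyc A_k$ and $\cyc A_{k+1}$, raising the count from $k-1$ to $k$ and completing the induction.
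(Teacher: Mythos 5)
Your proof is correct and takes essentially the same route as the paper, which in fact states Theorem~\ref{thm:vortexNerveRibbons} with no formal proof at all, relying only on the observations in the preceding paragraph (cycles taken in ascending nesting order, with each pair of closest cycles forming a ribbon and adjacent ribbons sharing a common cycle boundary). Your write-up simply makes that implicit argument precise---the linear order, the correspondence between consecutive cycle pairs and ribbons, and the bijectivity check ruling out ribbons on non-adjacent cycles---so it supplies the detail the paper omits rather than diverging from it.
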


\begin{example}
A sample vortex nerve $\vNrv K$ containing 3 nesting, non-centric cycles $\cyc A$,\\ $\cyc A'$,$\cyc B$ (in ascending order) and 2 ribbons $\rb E, \rb E'$ (also in ascending order) is represented in
Fig.~\ref{fig:nerveRibbons}.  In this particular nerve, cycle $\cyc A'$ is the outer boundary of ribbon
$\rb $ and the inner boundary of ribbon $\rb E'$.
\qquad \textcolor{blue}{\Squaresteel}
\end{example}

\begin{theorem}\label{thm:vortexNerveRibbonNerves}
A vortex nerve with $k>2$ cycles contains $k-2$ ribbon nerves.
\end{theorem}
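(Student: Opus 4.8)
The plan is to reduce the statement to the ribbon count already in hand together with the shared-cycle observation recorded just before Theorem~\ref{thm:vortexNerveRibbons}. First I would fix a vortex nerve $\vNrv K$ whose $k>2$ nesting, non-concentric cycles are listed in ascending (inner-to-outer) order as $\cyc C_1,\dots,\cyc C_k$. By Theorem~\ref{thm:vortexNerveRibbons} this nerve contains exactly $k-1$ ribbons, which I label $\rb E_1,\dots,\rb E_{k-1}$ in the induced order so that $\rb E_i$ is the closure of the annular region between its inner boundary $\cyc C_i$ and its outer boundary $\cyc C_{i+1}$.

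The key step is to identify which subcollections of these ribbons have nonempty intersection, since by Definition~\ref{def:ribbonNerve} those subcollections are exactly the ribbon nerves. For each $i$ with $1\le i\le k-2$, the outer boundary of $\rb E_i$ coincides with the inner boundary of $\rb E_{i+1}$, namely the cycle $\cyc C_{i+1}$; hence $\rb E_i\cap\rb E_{i+1}=\cyc C_{i+1}\neq\emptyset$, and $\{\rb E_i,\rb E_{i+1}\}$ is a ribbon nerve. Conversely, because the ribbons are nested annuli, two ribbons $\rb E_i,\rb E_j$ with $|i-j|\ge 2$ are separated by an intervening ribbon and so meet in the empty set, while no triple $\rb E_{i-1},\rb E_i,\rb E_{i+1}$ can share a point, for such a point would have to lie simultaneously on the disjoint cycles $\cyc C_i$ and $\cyc C_{i+1}$. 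Thus the ribbon nerves are precisely the consecutive pairs $\{\rb E_1,\rb E_2\},\dots,\{\rb E_{k-2},\rb E_{k-1}\}$, of which there are $k-2$.

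The main obstacle I anticipate is not the intersection computation, which is immediate once the shared interior cycles are identified, but fixing the enumeration so that the count lands on $k-2$ rather than $k-1$. I must be explicit that a ribbon nerve here is an overlapping consecutive pair glued along one shared cycle, and that singleton subcollections and nonconsecutive pairs contribute nothing. The nesting geometry delivers exactly this: the $k-2$ interior cycles $\cyc C_2,\dots,\cyc C_{k-1}$ (all cycles except the innermost $\cyc C_1$ and the outermost $\cyc C_k$) are in one-to-one correspondence with the ribbon nerves, which gives the asserted total of $k-2$.
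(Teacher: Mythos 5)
Your proposal is correct and follows the same route as the paper, which proves the theorem in one line as ``immediate from Theorem~\ref{thm:vortexNerveRibbons} and Def.~\ref{def:ribbonNerve}'': you simply make explicit the counting the paper leaves implicit, namely that the $k-1$ nested ribbons pairwise intersect only along the $k-2$ shared interior cycles $\cyc C_2,\dots,\cyc C_{k-1}$, so the consecutive pairs $\{\rb E_i,\rb E_{i+1}\}$ are exactly the ribbon nerves. Your added verification that nonconsecutive ribbons are disjoint and that no triple can share a point (since $\cyc C_i\cap\cyc C_{i+1}=\emptyset$) is a welcome strengthening of the paper's terse argument, and it matches the paper's own $k=3$ example yielding the single ribbon nerve $\left\{\rb E,\rb E'\right\}$.
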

\begin{proof}
Immediate from Theorem~\ref{thm:vortexNerveRibbons} and Def.~\ref{def:ribbonNerve} (ribbon nerve).
\end{proof} 

\begin{example}
From Theorem~\ref{thm:vortexNerveRibbonNerves}, a vortex nerve $\vNrv K$ containing $k = 3$ cycles has $k-2 = 1$ ribbon nerve $\rbNrv K = \left\{\rb E, \rb E'\in \vNrv K:\rb E\ \cap\ \rb E' \neq \emptyset\right\}$, which is represented in Fig.~\ref{fig:nerveRibbons}.
\qquad \textcolor{blue}{\Squaresteel}
\end{example}

\section{Preliminaries}
This section briefly introduces the \emph{approximate} descriptive closeness of cell complexes in a CW space.  For the axioms for an approximate descriptive proximity of nonempty sets $A,B$ (denoted by $A\ \dxnear\ B$), the usual set intersection $\cap$ for a traditional spatial proximity (denoted by $\delta$)~\cite[\S 1, p. 7]{Naimpally70},~\cite[p. 538]{Peters2012ams} is replaced by an approximate descriptive intersection (denoted by $\dxcap$), introduced in~\cite[\S 7.2, p. 303]{Peters2020CGTPhysics}, which is an extension of ordinary descriptive proximity (denoted by $\dnear$)~\cite{DiConcilio2018MCSdescriptiveProximities}, defined with descriptive intersection (denoted by $\dcap$) with a number of applications such as~\cite{Inan2019TJMdescriptiveProximity,AhmadPeters2018centroidalVortices,Peters2020CGTPhysics}.

Approximate closeness of nonempty sets $A,B$ is measured in terms of the Euclidean distance between feature vectors $\boldsymbol{\vec{a},\vec{b}}$ (denoted by $\norm{\boldsymbol{\vec{a}}-\boldsymbol{\vec{b}}}$) in $n$-dimensional Euclidean space $\mathbb{R}^n$.  In this context, $\mathbb{R}^n$  is a feature space in which each feature vector is a description of a nonempty set in a space $X$.  Let $2^X$ denote the collection of subsets in $X$. A probe function $\Phi:2^X\longrightarrow \mathbb{R}^n$ maps each nonempty subset $A$ in $2^X$ to a feature vector that describes $A$.      
The mapping $\Phi:2^X\longrightarrow \mathbb{R}^n$ is defined by
\[
\Phi(A) = \overbrace{\left(\mathbb{R}_1,\dots,\mathbb{R}_n\right).}^{\mbox{\textcolor{blue}{\bf Feature vector that describes $A\in 2^X$ in Euclidean space $ \mathbb{R}^n$}}}
\]

Nonempty sets $A,B$ with overlapping descriptions are descriptively proximal (denoted by $A\ \dnear\ B$). 
  The descriptive intersection\footnote{Many thanks to Tane 
	Vergili for suggesting a refinement of the usual definition for descriptive proximity in, {\em e.g.},~\cite[\S 3, p. 95]{DiConcilio2018MCSdescriptiveProximities}.} of nonempty subsets in $A\cup B$ (denoted by $A\ \dcap\ B$)  is defined by
\[
A\ \dcap\ B = \overbrace{\left\{x\in A\cup B: \Phi(x) \in \Phi(A)\ \cap\ \Phi(B)\right\}.}^{\mbox{\textcolor{blue}{\bf {\em i.e.}, $\boldsymbol{\mbox{Descriptions}\ \Phi(A)\ \&\ \Phi(B)\ \mbox{overlap}}$}}}
\] 
This form of descriptive proximity is not very useful, since it often the case that nonempty sets $A,B$ are close descriptively and yet $\Phi(A)\neq \Phi(B)$.  For this reason, we consider an approximate descriptive intersection on ribbon complexes.   In this form of set intersection, a pair of ribbon complexes $\rbx A$ on cell complex $K$ and $\rbx B$ on cell complex $K'$ belong to the collection of ribbons that have nonempty approximate descriptive intersection, provided the norm of the difference between the feature vectors $\Phi(\rbx A), \Phi(\rbx B)$ is less than some chosen threshold $th > 0$, {\em i.e.},
\begin{align*}
\mbox{\bf approximation threshold:}\ th &> 0.\\
\rbx A \in K, &\rbx B \in K'\\
\rbx A\ \dxnear\ \rbx B &\ \mbox{provided}\  \norm{\Phi(\rbx A) - \Phi(\rbx B)} < th:\\
                                 & \overbrace{\rbx A, \rbx B\in K\ \dxcap\ K'.}^{\mbox{\textcolor{blue}{\bf Descriptive intersection of ribbon complexes}}}
\end{align*}

\noindent In effect, we have
\[
K\ \dxcap\ K' = \overbrace{\left\{\rbx A, \rbx B\in K\cup K': \norm{\Phi(rbx A) - \Phi(rbx B)} < th\right\}.}^{\mbox{\textcolor{blue}{\bf Ribbon complexes that are $\boldsymbol{\dxnear}$ close}}}
\]


\noindent This leads to two possible forms of approximate descriptive closeness to consider, namely,
\begin{compactenum}[1$^o$] 
\item Approximate descriptive closeness of ribbon complexes $\rbx E\in K, \rbx E'\in K$ on a single space $K$, defined by
\[
\rbx A\ \dxnear\ \rbx B = \overbrace{\left\{\rbx E, \textbf{\cx} E'\in K: \norm{\Phi(\rbx E) - \Phi(\rbx E')} < th \right\},}^{\mbox{\textcolor{blue}{\bf Descriptions $\boldsymbol{\Phi(\rbx E),\Phi(\rbx E')}$ are\ $\dxnear$\ close in $K$ for $th$>0}}}
\]
\item Approximate descriptive closeness of ribbon complexes $\rbx E, \rbx E'$ on separated (disjoint) spaces $K, K'$, defined by
\[
\rbx A\ \dxnear\ \rbx B = \overbrace{\left\{\rbx E, \rbx E'\in K\cup K': \norm{\Phi(\rbx E) - \Phi(\rbx E')} < th \right\}.}^{\mbox{\textcolor{blue}{\bf Descriptions $\boldsymbol{\Phi(\rbx E),\Phi(\rbx E')}$ are\ $\dxnear$\ close in $K\cup K'$ for $th$>0}}}
\]
\end{compactenum}

An approximate descriptive intersection of distinct CW complexes $K, K'$  is easily derived from ordinary descriptive intersection $\dcap$ by considering the norm of the difference between the feature vectors $\Phi(\cx A), \Phi(\cx B)$, which is less than some chosen threshold $th > 0$, {\em i.e.},
\begin{align*}
\mbox{\bf\qquad Approximation threshold:}\ th &> 0.\\
\cx A \in K, &\cx B \in K'\\
\cx A\ \dxnear\ \cx B &,\ \mbox{provided}\  \norm{\Phi(\cx A) - \Phi(\cx B)} < th:\\
                                 &K\ \dxcap\ K'=\\ 
																 &\overbrace{\left\{\cx A, \cx B\in K\cup K':\norm{\Phi(\cx A) - \Phi(\cx B)} < th\right\}.}^{\mbox{\textcolor{blue}{\bf Approx. descriptive intersection of cell complexes}}}
\end{align*}

\begin{example}
Let $K,K'$ be CW complexes, ribbons $\rb E\in K, \rb A\in K'$ and let $\mathcal{B}_1$ be the Betti number, which is a count of the number of cycles in a cell complex, threshold $th = 1$ and let $\Phi(\rb E) = \mathcal{B}_1(\rb E), \Phi(\rb A) = \mathcal{B}_1(\rb A)$.  Then, for instance, we have
\begin{align*}
\mathcal{B}_1(\rb E\in K) &= 2\ \mbox{in Fig.~\ref{fig:rbE}}.\\
\mathcal{B}_1(\rb A\in K') &= 2\ \mbox{in Fig.~\ref{fig:rbNrvE}}.\ \mbox{Hence,}\\
\rb E\ \dxnear\ \rb A &,\ \mbox{since}\ \norm{\Phi(\rb E) - \Phi(\rb A)} < th,\ \mbox{and}\\
K\ \dxcap\ K' &\neq \emptyset\ \mbox{\qquad \textcolor{blue}{\Squaresteel}}.
\end{align*}
\end{example}

\noindent Many other instances of non-void approximate descriptive intersection are possible.

\begin{example}
Let $K,K'$ be a pair of cell complexes on a finite, bounded planar region.  Then consider sub-complexes $\cx A\in K, \cx B\in K'$ that are close to each other for some threshold $th > 0$.  That is,
\[
K\ \dxcap\ K' = \overbrace{\left\{\cx A, \cx B\in K\cup K': \norm{\Phi(\cx A) - \Phi(\cx B)} < th\right\}.}^{\mbox{\textcolor{blue}{\bf Collection of cell complexes that are $\boldsymbol{\dxnear}$ close}}}\mbox{\qquad \textcolor{blue}{\Squaresteel}}
\]
\end{example}

\begin{example}
Consider isolating those ribbon complexes $\rb A\in K, \rb B\in K'$ that are close to each other within some chosen threshold $th >0$, {\em i.e.},
\[
K\ \dxcap\ K' = \overbrace{\left\{\rb A\in K, \rb B\in K': \norm{\Phi(\rb A) - \Phi(\rb B)} < th\right\}.}^{\mbox{\textcolor{blue}{\bf Ribbon shapes that are $\boldsymbol{\dxnear}$ close}}}
\]
Let $\mathcal{B}_2(\cx E)$ be the Betti number which is a count of the number of holes in a cell complex $\cx E$.  In Fig.~\ref{fig:rbE}, $\mathcal{B}_2(\rb E)$ = 2, {\em i.e.}, only ribbon $\rb E$ has two holes, which appear as opaque regions {\textcolor{gray!80}{\Large $\boldsymbol{\bullet}$}} on $\rb E$.  In Fig.~\ref{fig:rbNrvE}, $\mathcal{B}_2(\rbNrv K)$ = 5, $\mathcal{B}_2(\rb A)$ = 2, and $\mathcal{B}_2(\rb B)$ = 3.  Let threshold $th = 1$, $\Phi(\cx E) = \mathcal{B}_2(\cx E)$ and require $\norm{\Phi(\cx A) - \Phi(\cx B)} < th$ for a pair of CW cell complexes $\cx A,\cx B$ in a CW complex $K$.  Then, we have
\begin{align*}
th &= 1.\\
\norm{\Phi(\rb E)) - \Phi(\rb A)} &= 0 < th\Rightarrow \rb E\ \dxnear\ \rb A.\\
\norm{\Phi(\rb E)) - \Phi(\rb B)} &\nless\ th\Rightarrow \rb E\ \not\dxnear\ \rb B.\\
\norm{\Phi(\rb E)) - \Phi(\rbNrv K)} &\nless\ th\Rightarrow \rb E\ \not\dxnear\ \rbNrv E.\\
\end{align*}
In effect, only $\rb E$ in Fig.~\ref{fig:rbE} and $\rb A$ in Fig.~\ref{fig:rbNrvE} have approximate descriptive proximity, {\em i.e.} only these ribbon complexes are approximately close descriptively relative to the chosen threshold and feature vector defined in terms of the Betti number $\mathcal{B}_2(\rb E)$ for a ribbon complex $\rb E$ in the CW space $K$ and $\mathcal{B}_2(\rb A)$ for a ribbon complex $\rb A$ in the CW space $K'$ represented in Fig.~\ref{fig:vortexCycles}.
\qquad \textcolor{blue}{\Squaresteel}
\end{example}

Let $K$ be a finite, bounded, planar nonempty space, $A,B\in 2^K$. If $A \dxcap B$ is nonempty, there is at least one element of $A$ with a description that approximately matches the description of an element of $B$.    It is entirely possible to identify a pair of nonempty sets $A, B$ separated spatially ({\em i.e.}, $A$ and $B$ have no members in common) and yet $A,B$ have approximate matching descriptions.    The pair $\left(K,\dxnear\right)$ is an approximate descriptive proximity space, provided the following axioms are satisfied.

\begin{description}
\item[{\rm\bf (xdP0)}] $\emptyset\ \not{\dxnear}\ A,\ \mbox{for all}\ A \subset K$, {\em i.e.}, the empty set is far from any cell complex in $K$.
\item[{\rm\bf (xdP1)}] $A\ \dxnear\ B \Leftrightarrow B\ \dxnear\ A$.
\item[{\rm\bf (xdP2)}] $A\ \dxcap\ B \neq \emptyset \Rightarrow\ A\ \dxnear\ B$.
\item[{\rm\bf (xdP3)}] $A\ \dxnear\ (B \cup C) \Leftrightarrow A\ \dxnear\ B $ or $A\ \dxnear\ C$.
\end{description}

\noindent The converse of axiom {\bf xdP2} also holds (see Lemma~\ref{lemma:dP2converse}).  To prove this, we introduce an approximate form of descriptive intersection of nonvoid sub-complexes $\cx E, \cx E'$ on $K$ (denoted by $\cx E\ \dxcap\ \cx E'$) is defined by
\[
\mathop{\bigcap}\limits_{\dxnear}K = \overbrace{\left\{\sh E, \sh E'\in 2^K: \norm{\Phi(\sh E) - \Phi(\sh E')} < th \right\},}^{\mbox{\textcolor{blue}{\bf Descriptions $\boldsymbol{\Phi(\sh E),\Phi(\sh E')}$ are close}}}.
\]

\begin{lemma}\label{lemma:dP2converse}
Let $K$ be a CW complex equipped with the relation $\dxnear$, $A,B\in 2^K$.   Then $A\ \dxnear\ B$ implies $A\ \dxcap\ B\neq \emptyset$.
\end{lemma}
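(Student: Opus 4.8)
The plan is to derive the converse of axiom {\bf (xdP2)} by unwinding the two definitions at play and observing that both rest on the \emph{same} threshold inequality $\norm{\Phi(\cdot) - \Phi(\cdot)} < th$. First I would take the hypothesis $A\ \dxnear\ B$ and, by the defining condition for approximate descriptive closeness, extract the inequality $\norm{\Phi(A) - \Phi(B)} < th$ for the fixed threshold $th > 0$ chosen for the space $\left(K,\dxnear\right)$.

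Next I would recall the definition of the approximate descriptive intersection, which collects those sub-complexes $\sh E, \sh E'\subseteq A\cup B$ (equivalently, members of $2^K$ lying in $A\cup B$) whose descriptions satisfy $\norm{\Phi(\sh E) - \Phi(\sh E')} < th$. The decisive step is to exhibit an explicit witness: set $\sh E = A$ and $\sh E' = B$. Since $A,B\in 2^K$ with $A\subseteq A\cup B$ and $B\subseteq A\cup B$, these are admissible sub-complexes, and the inequality $\norm{\Phi(A) - \Phi(B)} < th$ already established shows the pair $A, B$ meets the membership condition for $A\ \dxcap\ B$. Hence $A\ \dxcap\ B$ contains at least this witness and is therefore nonempty.

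I expect the only delicate point to be bookkeeping rather than mathematics. One must fix, at the outset, that the threshold $th$ appearing in the definition of $\dxnear$ is identified with the threshold appearing in the definition of $\dxcap$, and one must check that $A$ and $B$ genuinely qualify as admissible sub-complexes of $A\cup B$ in the sense demanded by the approximate descriptive intersection. Once that identification is pinned down, the implication is immediate; combined with axiom {\bf (xdP2)}, it upgrades the one-way implication to the equivalence $A\ \dxcap\ B\neq\emptyset\Leftrightarrow A\ \dxnear\ B$.
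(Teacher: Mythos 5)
Your proof is correct and takes essentially the same route as the paper's: both unwind the hypothesis $A\ \dxnear\ B$ to the inequality $\norm{\Phi(A)-\Phi(B)} < th$ and then use the pair $A,B$ itself as the witness satisfying the membership condition for the approximate descriptive intersection, concluding $A\ \dxcap\ B\neq\emptyset$. Your explicit bookkeeping that the threshold $th$ in the definition of $\dxnear$ must be identified with the one in the definition of $\dxcap$ is a point the paper leaves implicit, but it does not change the argument.
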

\begin{proof}
Let $th > 0$, nonempty $A,B\in 2^K$. By definition, $A\ \dxnear\ B$ implies that $\norm{\Phi(A) - \Phi(B)} < th$.   Consequently, $A,B\in \mathop{\bigcap}\limits_{\xnear}K$.  Hence, $A\ \dxcap\ B \neq \emptyset$ and the converse of {\bf (xdP2)} follows.    
\end{proof}

\begin{theorem}\label{theorem:dP2result}
Let $K$ be a collection of planar ribbon complexes equipped with the proximity $\dxnear$, $\rb A,\rb B\in 2^K$.   Then $\rb A\ \dxnear\ \rb B$ implies $\rb A\ \dxcap\ \rb B\neq \emptyset$.
\end{theorem}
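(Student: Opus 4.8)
The plan is to recognize that this statement is nothing more than the specialization of Lemma~\ref{lemma:dP2converse} to the case where the two subsets under consideration happen to be ribbon complexes. Since the hypothesis of the theorem takes $K$ to be a collection of planar ribbon complexes and $\rb A,\rb B\in 2^K$, the ribbon complexes $\rb A$ and $\rb B$ are themselves members of $2^K$. Therefore, the general result already established in Lemma~\ref{lemma:dP2converse} applies verbatim with $A\assign\rb A$ and $B\assign\rb B$. The proof is essentially an instantiation argument.

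Concretely, I would proceed as follows. First, fix a threshold $th>0$ and assume the hypothesis $\rb A\ \dxnear\ \rb B$. By the definition of the approximate descriptive proximity $\dxnear$, this closeness relation unpacks directly into the feature-vector inequality $\norm{\Phi(\rb A) - \Phi(\rb B)} < th$. Second, I would appeal to the definition of the approximate descriptive intersection: the membership condition that places a pair of sub-complexes into $\mathop{\bigcap}\limits_{\xnear}K$ is exactly that the norm of the difference of their descriptions falls below $th$. Since $\rb A$ and $\rb B$ satisfy this condition, both belong to $\mathop{\bigcap}\limits_{\xnear}K$, so the approximate descriptive intersection collects a nonempty witness. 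Hence $\rb A\ \dxcap\ \rb B\neq\emptyset$, which is the desired conclusion.

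There is no genuine obstacle in this argument: the only thing to verify is that a ribbon complex is a legitimate element of $2^K$ so that Lemma~\ref{lemma:dP2converse} may be invoked, and this is immediate from the hypothesis that $K$ is a collection of planar ribbon complexes. The sole point requiring a sentence of care is making explicit that the feature-vector description $\Phi$ is defined on ribbon complexes (as it is throughout Section~2, where $\Phi(\rbx A)$ and $\Phi(\rb A)$ are used interchangeably as descriptions of ribbon structures), so that the inequality $\norm{\Phi(\rb A)-\Phi(\rb B)} < th$ is well posed. Once that is noted, the theorem follows as a direct corollary of Lemma~\ref{lemma:dP2converse}, and one may simply write that the result is immediate by specializing the lemma to ribbon complexes.
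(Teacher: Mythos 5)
Your proposal is correct and matches the paper's proof, which simply cites Lemma~\ref{lemma:dP2converse} as immediate; you instantiate the same lemma with $A \assign \rb A$, $B \assign \rb B$ and, additionally, unpack its internal argument (the inequality $\norm{\Phi(\rb A)-\Phi(\rb B)} < th$ and membership in $\mathop{\bigcap}\limits_{\xnear}K$), which is more detail than the paper gives but the identical route.
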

\begin{proof}
Immediate from Lemma~\ref{lemma:dP2converse}. 
\end{proof}

\begin{corollary}\label{cor:dP2result}
Let $K$ be a collection of planar ribbon nerves equipped with the proximity $\dxnear$, $\rbNrv A,\rbNrv B\in 2^K$.   Then $\rbNrv A\ \dxnear\ \rbNrv B$ if and only if $\rbNrv A\ \dxcap\ \rbNrv B\neq \emptyset$.
\end{corollary}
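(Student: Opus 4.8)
The plan is to establish the biconditional by proving its two implications separately, since the earlier results supply only one of the two directions: Lemma~\ref{lemma:dP2converse} and Theorem~\ref{theorem:dP2result} give $\dxnear \Rightarrow \dxcap\neq\emptyset$, whereas the reverse direction is exactly what axiom {\bf (xdP2)} asserts. First I would record that, because $K$ is a collection of ribbon nerves and $\rbNrv A,\rbNrv B\in 2^K$, each of $\rbNrv A$ and $\rbNrv B$ is a nonvoid cell complex on which the probe function $\Phi$ is defined; hence both the relation $\dxnear$ and the approximate descriptive intersection $\dxcap$ are meaningful for this pair. This observation is what lets me quote Lemma~\ref{lemma:dP2converse}, which is proved for arbitrary $A,B\in 2^K$ and therefore applies verbatim with $A=\rbNrv A$ and $B=\rbNrv B$.

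For the forward implication, I would invoke Lemma~\ref{lemma:dP2converse} directly: the hypothesis $\rbNrv A\ \dxnear\ \rbNrv B$ unpacks, by definition of $\dxnear$, to $\norm{\Phi(\rbNrv A)-\Phi(\rbNrv B)}<th$ for the chosen threshold $th>0$, so $\rbNrv A,\rbNrv B\in\mathop{\bigcap}\limits_{\xnear}K$ and consequently $\rbNrv A\ \dxcap\ \rbNrv B\neq\emptyset$. This is precisely the converse of axiom {\bf (xdP2)}, now specialised from general sub-complexes to ribbon nerves.

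For the reverse implication, I would appeal to axiom {\bf (xdP2)} itself, namely $\rbNrv A\ \dxcap\ \rbNrv B\neq\emptyset\Rightarrow\rbNrv A\ \dxnear\ \rbNrv B$. Combining the two implications then yields the stated equivalence.

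I do not expect a genuine obstacle here: the corollary is essentially a bookkeeping step pairing Lemma~\ref{lemma:dP2converse} with axiom {\bf (xdP2)}, and the content beyond Theorem~\ref{theorem:dP2result} is only the upgrade from a one-sided implication to an equivalence. The single point that warrants a sentence of care is confirming that $\rbNrv A$ and $\rbNrv B$, as elements of $2^K$, are legitimate arguments for $\Phi$ and hence for both relations; once that is noted, both directions are immediate.
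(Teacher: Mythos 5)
Your proposal is correct and matches the paper's own proof essentially verbatim: the forward direction is exactly Lemma~\ref{lemma:dP2converse} applied with $A=\rbNrv A$, $B=\rbNrv B$, and the reverse direction is a direct appeal to axiom {\bf (xdP2)}. Your added sentence checking that $\rbNrv A,\rbNrv B\in 2^K$ are legitimate arguments for $\Phi$ is a small bit of care the paper omits, but it does not change the argument.
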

\begin{proof}$\mbox{}$\\
Let $K$ be equipped with $\dxnear$. \\
$\Rightarrow$:  From Lemma~\ref{lemma:dP2converse}, $\rbNrv A\ \dxnear\ \rbNrv B$ implies $\rbNrv A\ \dxcap\ \rbNrv B\neq \emptyset$.\\
$\Leftarrow$ From Axiom xdP2, $\rbNrv A\ \dxcap\ \rbNrv B\neq \emptyset$ implies $\rbNrv\ \dxnear\ \rbNrv$.
\end{proof}

\begin{lemma}\label{lemma:cellularProximitySpace}
Let $K$ be a collection of ribbon complexes equipped with the relation $\dxnear$, ribbons $\rb A,\rb B\in 2^K$.   Then $K$ is an approximate descriptive proximity space.
\end{lemma}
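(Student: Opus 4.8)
The plan is to verify directly that the four defining axioms (xdP0)--(xdP3) of an approximate descriptive proximity space hold for the pair $\left(K,\dxnear\right)$, where $\dxnear$ is the approximate descriptive proximity determined by a fixed probe function $\Phi$ and a fixed threshold $th>0$. Once all four are checked, the conclusion is immediate from the definition of such a space.

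First I would dispose of (xdP0): since $\Phi$ assigns a feature vector only to a nonempty set, the empty set carries no description available for comparison, so no inequality of the form $\norm{\Phi(\emptyset) - \Phi(\rb A)} < th$ can ever be satisfied; hence $\emptyset\ \not\dxnear\ \rb A$ for every $\rb A\subset K$. Axiom (xdP1) is immediate from the symmetry of the Euclidean norm, namely $\norm{\Phi(\rb A) - \Phi(\rb B)} = \norm{\Phi(\rb B) - \Phi(\rb A)}$, so that $\rb A\ \dxnear\ \rb B$ holds exactly when $\rb B\ \dxnear\ \rb A$. For (xdP2), I would observe that a nonempty approximate descriptive intersection $\rb A\ \dxcap\ \rb B$ produces, by the defining condition of $\dxcap$, a pair of members whose feature vectors lie within $th$ of one another, which is precisely the condition $\rb A\ \dxnear\ \rb B$; the converse implication is already recorded in Lemma~\ref{lemma:dP2converse}.

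The one axiom requiring genuine argument is the union axiom (xdP3), asserting $\rb A\ \dxnear\ (\rb B\cup \rb C)\Leftrightarrow \rb A\ \dxnear\ \rb B\ \mbox{or}\ \rb A\ \dxnear\ \rb C$. Here I would use the fact that the probe description distributes over unions, so that every element description realized on $\rb B\cup \rb C$ is realized on $\rb B$ or on $\rb C$. For the forward direction, closeness of $\rb A$ to the union yields an element of $\rb A$ whose description matches that of some element of $\rb B\cup \rb C$, and this matched element lies in $\rb B$ or in $\rb C$, giving closeness to one of the two pieces. The backward direction is the easier inclusion, since $\rb B\subseteq \rb B\cup \rb C$ (and likewise for $\rb C$) forces any description matched within a summand to be matched within the union.

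The main obstacle is (xdP3), and within it the forward direction: I must be careful that the threshold comparison $\norm{\Phi(\cdot) - \Phi(\cdot)} < th$ behaves correctly under the union, that is, that the witnessing feature vector for closeness to $\rb B\cup \rb C$ genuinely originates from one of the summands rather than from some spurious mixed description. This is secured by interpreting the descriptions entering $\dxcap$ so that $\Phi(\rb B\cup \rb C)$ decomposes as the union of the descriptions of $\rb B$ and $\rb C$ at the element level. After that decomposition is justified, the four verifications combine to show that $\left(K,\dxnear\right)$ satisfies the definition of an approximate descriptive proximity space.
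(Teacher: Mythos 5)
Your proposal takes essentially the same route as the paper: a direct axiom-by-axiom verification of (xdP0)--(xdP3) from the defining condition $\norm{\Phi(\cdot)-\Phi(\cdot)} < th$, with symmetry of the Euclidean norm giving (xdP1) and the definition of $\dxcap$ (together with Lemma~\ref{lemma:dP2converse}) giving (xdP2). On (xdP0) you are in fact cleaner than the paper, whose line ``the empty set contains ribbons'' is evidently a typo for ``contains no ribbons''; your observation that no inequality $\norm{\Phi(\emptyset)-\Phi(\rb A)} < th$ can be witnessed, since $\Phi$ describes only nonempty sets, is the right repair.

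The one substantive divergence is (xdP3), which you correctly identify as the main obstacle, but your proposed fix does not go through under the paper's definitions --- though the paper's own proof is no more rigorous there. As defined in the Preliminaries, $\Phi:2^X\longrightarrow \mathbb{R}^n$ assigns a \emph{single} feature vector to a whole set, and $\rb A\ \dxnear\ (\rb B\cup \rb C)$ means $\norm{\Phi(\rb A)-\Phi(\rb B\cup \rb C)} < th$; there is no element-level description available to ``distribute over the union.'' The element-wise reading you invoke is the mechanism of the ordinary descriptive intersection $\dcap$, not of $\dxnear$. Concretely, take $\Phi = \mathcal{B}_2$ (hole count, one of the paper's own running choices) with $\mathcal{B}_2(\rb B)=2$, $\mathcal{B}_2(\rb C)=3$, $\mathcal{B}_2(\rb B\cup \rb C)=5$, $\mathcal{B}_2(\rb A)=5$ and $th=1$: then $\rb A\ \dxnear\ (\rb B\cup \rb C)$ holds while neither $\rb A\ \dxnear\ \rb B$ nor $\rb A\ \dxnear\ \rb C$ does, so the forward implication of (xdP3) fails unless one \emph{assumes} a compatibility condition on $\Phi$, e.g.\ $\Phi(\rb B\cup \rb C)\in \left\{\Phi(\rb B),\Phi(\rb C)\right\}$. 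The paper glosses the identical gap with the phrase ``by definition of $\cup$,'' so your attempt faithfully reproduces the published argument's structure, including its weakest step; but your claim that the decomposition ``is secured'' by reinterpreting the descriptions is not a derivation --- it is an unstated hypothesis, and it should be flagged as such.
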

\begin{proof}$\mbox{}$\\
Let $K$ be a collection of ribbon complexes equipped with $\dxnear$ and threshold $th > 0$.   Then
(dP0): The empty set contains ribbons.   Hence, $\emptyset\ \not{\dxnear}\ \rb A, \forall\ \rb A \in 2^K$.\\
(dP1):  Assume $\rb A\ \dxnear\ \rb B$, if and only if $\norm{\Phi(\rb A) - \Phi(\rb B)} < th$.  Then $\rb A\ \dxcap\ \rb B \neq \emptyset$, if and only if $\rb B\ \dxcap\ \rb A \neq \emptyset$, if and only if $\rb B\ \dxnear\ \rb A$.\\
(dP2):  Assume $\rb A\ \dxcap\ \rb B \neq \emptyset$.  Then, by definition of $\dxcap$, 
$\norm{\Phi(\rb A) - \Phi(\rb B)} < th$.  Hence, $\rb A\ \dxnear\ \rb B$.\\
(dP3): $\rb A\ \dxnear\ (\rb B\ \cup\ \rb C)$, if and only if $\norm{\Phi(\rb A) - \Phi(\rb B)} < th$, implying $\rb A\ \dxnear\ \rb B$ or, by definition of $\cup$, $\norm{\Phi(\rb A) - \Phi(\rb C)} < th$, implying $\rb A\ \dxnear\ \rb C$.\\
Hence, $\left(K,\dxnear\right)$ is an approximate descriptive proximity space.
\end{proof}

\begin{theorem}\label{theorem:dP2vNRvResult}
Let $K$ be a collection of planar ribbon nerves equipped with the proximity $\dxnear$.   Then $\left(K,\dxnear\right)$ is an approximate descriptive proximity space.
\end{theorem}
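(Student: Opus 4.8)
The plan is to verify the four axioms (xdP0)--(xdP3) for the pair $\left(K,\dxnear\right)$ when the objects in play are now ribbon nerves $\rbNrv A,\rbNrv B\in 2^K$, following the template already established for ribbon complexes in Lemma~\ref{lemma:cellularProximitySpace}. The key observation is that every ribbon nerve is itself a collection of planar ribbons (Def.~\ref{def:ribbonNerve}), hence an element of $2^K$ admitting a feature-vector description $\Phi(\rbNrv A)$, and the relation $\dxnear$ is governed entirely by the scalar inequality $\norm{\Phi(\rbNrv A) - \Phi(\rbNrv B)} < th$ for the fixed threshold $th>0$. Because this definition makes no use of the internal structure of its arguments beyond the probe function $\Phi$ and the threshold, the verifications carry over verbatim with $\rb A,\rb B$ replaced by $\rbNrv A,\rbNrv B$.

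First, for (xdP0) I would note that the empty set carries no ribbon nerve and so has no description matching that of any nonempty ribbon nerve; hence $\emptyset\ \not{\dxnear}\ \rbNrv A$ for every $\rbNrv A\in 2^K$. For (xdP1), symmetry is immediate from the symmetry of the Euclidean norm, $\norm{\Phi(\rbNrv A) - \Phi(\rbNrv B)} = \norm{\Phi(\rbNrv B) - \Phi(\rbNrv A)}$, so that $\rbNrv A\ \dxnear\ \rbNrv B \Leftrightarrow \rbNrv B\ \dxnear\ \rbNrv A$. For (xdP2), I would invoke Corollary~\ref{cor:dP2result}, which already records the biconditional $\rbNrv A\ \dxnear\ \rbNrv B \Leftrightarrow \rbNrv A\ \dxcap\ \rbNrv B\neq\emptyset$ for ribbon nerves; its forward implication $\rbNrv A\ \dxcap\ \rbNrv B\neq\emptyset \Rightarrow \rbNrv A\ \dxnear\ \rbNrv B$ supplies the axiom directly from the definition of $\dxcap$.

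Finally, for (xdP3) I would argue, as in the ribbon-complex case, that $\rbNrv A\ \dxnear\ (\rbNrv B\ \cup\ \rbNrv C)$ holds precisely when some member of $\rbNrv B\ \cup\ \rbNrv C$ is $\dxnear$-close to $\rbNrv A$, i.e.\ when $\norm{\Phi(\rbNrv A) - \Phi(\rbNrv B)} < th$ or $\norm{\Phi(\rbNrv A) - \Phi(\rbNrv C)} < th$, which is exactly $\rbNrv A\ \dxnear\ \rbNrv B$ or $\rbNrv A\ \dxnear\ \rbNrv C$. Having checked all four axioms, $\left(K,\dxnear\right)$ is an approximate descriptive proximity space. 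I expect the only delicate point to be (xdP3): one must read the distribution of $\dxnear$ over the union through the membership condition defining $\dxcap$ (a matching element lying in the union) rather than through any putative additivity of $\Phi$ on unions, since $\Phi$ need not behave linearly under $\cup$. With that reading the equivalence is routine, and the theorem reduces, in effect, to Lemma~\ref{lemma:cellularProximitySpace} together with Corollary~\ref{cor:dP2result}.
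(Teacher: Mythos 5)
Your proposal is correct and takes essentially the same route as the paper: the paper's entire proof is the one-line observation that each planar ribbon nerve is itself a collection of ribbons equipped with $\dxnear$, so the result is immediate from Lemma~\ref{lemma:cellularProximitySpace} --- exactly your opening observation, with your axiom-by-axiom verification simply unpacking that lemma with $\rb A,\rb B$ renamed to $\rbNrv A,\rbNrv B$. One small caution: for (xdP2) you should lean on your own remark that the implication follows directly from the definition of $\dxcap$, rather than on Corollary~\ref{cor:dP2result}, since the paper justifies the backward direction of that corollary by appealing to Axiom (xdP2) itself, which would be circular in a proof that this axiom holds.
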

\begin{proof}
Immediate from Lemma~\ref{lemma:cellularProximitySpace}, since each planar ribbon nerve is a collection of ribbons equipped with $\dxnear$. 
\end{proof}

\begin{figure}[!ht]
\centering
\subfigure[Ribbon nerve $\rbNrv E$ in a ribbon complex on CW space $K$]
 {\label{fig:rbNrvE1}
\begin{pspicture}
(-1.5,-0.5)(4.0,4.0)
\psframe[linewidth=0.75pt,linearc=0.25,cornersize=absolute,linecolor=blue](-1.25,-0.25)(3.75,4)
\psline*[linestyle=solid,linecolor=green!30]%
(0,0)(1,0.5)(2.0,0.0)(3.0,0.5)(3.0,1.5)(2.0,2.0)(1,1.5)(0,2)
(-1,1.5)(-1,0.5)(0,0)
\psline[linestyle=solid,linecolor=black]%
(0,0)(1,0.5)(2.0,0.0)(3.0,0.5)(3.0,1.5)(2.0,2.0)(1,1.5)(0,2)
(-1,1.5)(-1,0.5)(0,0)
\psdots[dotstyle=o,dotsize=2.2pt,linewidth=1.2pt,linecolor=black,fillcolor=gray!80]%
(0,0)(1,0.5)(2.0,0.0)(3.0,0.5)(3.0,1.5)(2.0,2.0)(1,1.5)(0,2)
(-1,1.5)(-1,0.5)(0,0)
\psline*[linestyle=solid,linecolor=white]%
(0,0.25)(1,0.75)(2.0,0.25)(2.5,0.5)(2.5,1.25)(2.0,1.55)(1,1.25)(0,1.5)
(-.55,1.25)(-.55,0.75)(0,0.25)
\psline[linestyle=solid,linecolor=black]%
(0,0.25)(1,0.75)(2.0,0.25)(2.5,0.5)(2.5,1.25)(2.0,1.55)(1,1.25)(0,1.5)
(-.55,1.25)(-.55,0.75)(0,0.25)
\psdots[dotstyle=o,dotsize=2.2pt,linewidth=1.2pt,linecolor=black,fillcolor=gray!80]%
(0,0.25)(1,0.75)(2.0,0.25)(2.5,0.5)(2.5,1.25)(2.0,1.55)(1,1.25)(0,1.5)
(-.55,1.25)(-.55,0.75)(0,0.25)
\psline*[linestyle=solid,linecolor=orange!35]%
(2,2)(1,2.25)(1.0,3.55)(2.25,3.85)(3.5,3.25)(3.5,2.25)(2,2)
\psline[linestyle=solid,linecolor=black]%
(2,2)(1,2.25)(1.0,3.55)(2.25,3.85)(3.5,3.25)(3.5,2.25)(2,2)
\psdots[dotstyle=o,dotsize=2.2pt,linewidth=1.2pt,linecolor=black,fillcolor=gray!80]%
(2,2)(1,2.25)(1.0,3.55)(2.25,3.85)(3.5,3.25)(3.5,2.25)(2,2)
\psdots[dotstyle=o,dotsize=2.5pt,linewidth=1.2pt,linecolor=black,fillcolor=red!80]%
(2,2)
\psline*[linestyle=solid,linecolor=white]%
(2,2.25)(1.25,2.5)(1.25,3.0)(2.25,3.25)(3.25,3.0)(3.25,2.35)(2,2.25)
\psline[linestyle=solid,linecolor=black]%
(2,2.25)(1.25,2.5)(1.25,3.0)(2.25,3.25)(3.25,3.0)(3.25,2.35)(2,2.25)
\psdots[dotstyle=o,dotsize=2.2pt,linewidth=1.2pt,linecolor=black,fillcolor=gray!80]%
(2,2.25)(1.25,2.5)(1.25,3.0)(2.25,3.25)(3.25,3.0)(3.25,2.35)(2,2.25)
\psdots[dotstyle=o,dotsize=5.2pt,linewidth=1.2pt,linecolor=black,fillcolor=gray!90]%
(-.80,1.05)(2.80,0.85)(2.80,1.2)(2.30,3.41)(2.50,3.61)(2.80,3.31)
\psline*[linestyle=solid,linecolor=brown!10]%
(2,2)(1,2.0)(0.0,3.0)(-0.20,3.25)(-1.0,3.25)(-1.0,2.25)
(0.0,2.15)(1.0,1.75)
(2,2)
\psline[linestyle=solid,linecolor=black]%
(2,2)(1,2.0)(0.0,3.0)(-0.20,3.25)(-1.0,3.25)(-1.0,2.25)
(0.0,2.15)(1.0,1.75)
(2,2)
\psdots[dotstyle=o,dotsize=2.2pt,linewidth=1.2pt,linecolor=black,fillcolor=gray!80]%
(2,2)(1,2.0)(0.0,3.0)(-0.20,3.25)(-1.0,3.25)(-1.0,2.25)
(0.0,2.15)(1.0,1.75)
(2,2)
\psdots[dotstyle=o,dotsize=2.5pt,linewidth=1.2pt,linecolor=black,fillcolor=red!80]%
(2,2)
\psline*[linestyle=solid,linecolor=white]%
(-0.85,2.75)(-0.85,2.35)(0.0,2.35)(0.25,2.15)(0.25,2.45)
(-0.85,2.75)
\psline[linestyle=solid,linecolor=black]%
(-0.85,2.75)(-0.85,2.35)(0.0,2.35)(0.25,2.15)(0.25,2.45)
(-0.85,2.75)
\psdots[dotstyle=o,dotsize=2.2pt,linewidth=1.2pt,linecolor=black,fillcolor=gray!80]%
(-0.85,2.75)(-0.85,2.35)(0.0,2.35)(0.25,2.15)(0.25,2.45)
(-0.85,2.75)
\rput(-1.0,3.75){\footnotesize $\boldsymbol{K}$}
\rput(0.0,1.75){\footnotesize $\boldsymbol{rb A}$}
\rput(0.3,3.5){\footnotesize $\boldsymbol{rbNrv E}$}
\rput(2.0,1.85){\footnotesize $\boldsymbol{a}$}
\rput(1.5,3.35){\footnotesize $\boldsymbol{rb B}$}
\rput(-0.50,3.00){\footnotesize $\boldsymbol{rb B'}$}
\end{pspicture}}\hfil
\subfigure[Ribbon nerve $\rbNrv E'$ in a ribbon complex on CW space $K'$]
 {\label{fig:rbNrvE2}
\begin{pspicture}
(-1.5,-0.5)(4.0,4.0)
\psframe[linewidth=0.75pt,linearc=0.25,cornersize=absolute,linecolor=blue](-1.25,-0.25)(3.75,4)
\psline*[linestyle=solid,linecolor=green!30]%
(0,0)(1,0.5)(2.0,0.0)(3.0,0.5)(3.0,1.5)(2.0,2.0)(1,1.5)(0,2)
(-1,1.3)(-1,0.5)(0,0)
\psline[linestyle=solid,linecolor=black]%
(0,0)(1,0.5)(2.0,0.0)(3.0,0.5)(3.0,1.5)(2.0,2.0)(1,1.5)(0,2)
(-1,1.3)(-1,0.5)(0,0)
\psdots[dotstyle=o,dotsize=2.2pt,linewidth=1.2pt,linecolor=black,fillcolor=gray!80]%
(0,0)(1,0.5)(2.0,0.0)(3.0,0.5)(3.0,1.5)(2.0,2.0)(1,1.5)(0,2)
(-1,1.3)(-1,0.5)(0,0)
\psline*[linestyle=solid,linecolor=white]%
(0,0.25)(1,0.75)(2.0,0.25)(2.5,0.5)(2.5,1.25)(2.0,1.55)(1,1.05)(0,1.5)
(-.55,1.25)(-.55,0.75)(0,0.25)
\psline[linestyle=solid,linecolor=black]%
(0,0.25)(1,0.75)(2.0,0.25)(2.5,0.5)(2.5,1.25)(2.0,1.55)(1,1.05)(0,1.5)
(-.55,1.25)(-.55,0.75)(0,0.25)
\psdots[dotstyle=o,dotsize=2.2pt,linewidth=1.2pt,linecolor=black,fillcolor=gray!80]%
(0,0.25)(1,0.75)(2.0,0.25)(2.5,0.5)(2.5,1.25)(2.0,1.55)(1,1.05)(0,1.5)
(-.55,1.25)(-.55,0.75)(0,0.25)
\psline*[linestyle=solid,linecolor=blue!20]%
(2,2)(1,1.50)(1.0,3.55)(2.25,3.85)(3.5,3.25)(3.5,2.25)(2,2)
\psline[linestyle=solid,linecolor=black]%
(2,2)(1,1.50)(1.0,3.55)(2.25,3.85)(3.5,3.25)(3.5,2.25)(2,2)
\psdots[dotstyle=o,dotsize=2.2pt,linewidth=1.2pt,linecolor=black,fillcolor=gray!80]%
(2,2)(1.0,3.55)(2.25,3.85)(3.5,3.25)(3.5,2.25)(2,2)
\psdots[dotstyle=o,dotsize=2.5pt,linewidth=1.2pt,linecolor=black,fillcolor=red!80]%
(1,1.50)(2,2)
\psline*[linestyle=solid,linecolor=white]%
(2,2.25)(1.25,2.5)(1.25,3.0)(2.25,3.25)(3.25,3.0)(3.25,2.35)(2,2.25)
\psline[linestyle=solid,linecolor=black]%
(2,2.25)(1.25,2.5)(1.25,3.0)(2.25,3.25)(3.25,3.0)(3.25,2.35)(2,2.25)
\psdots[dotstyle=o,dotsize=2.2pt,linewidth=1.2pt,linecolor=black,fillcolor=gray!80]%
(2,2.25)(1.25,2.5)(1.25,3.0)(2.25,3.25)(3.25,3.0)(3.25,2.35)(2,2.25)
\psdots[dotstyle=o,dotsize=5.2pt,linewidth=1.2pt,linecolor=black,fillcolor=gray!90]%
(-.80,1.05)(2.80,0.85)(2.80,1.2)(2.30,3.41)(2.50,3.61)(2.80,3.31)
\rput(-1.0,3.75){\footnotesize $\boldsymbol{K'}$}
\rput(0.0,1.75){\footnotesize $\boldsymbol{rb A'}$}
\rput(0.3,2.5){\footnotesize $\boldsymbol{rbNrv E'}$}
\rput(2.0,1.85){\footnotesize $\boldsymbol{a}$}
\rput(1,1.35){\footnotesize $\boldsymbol{a'}$}
\rput(1.5,3.35){\footnotesize $\boldsymbol{rb B'}$}
\end{pspicture}}
\caption[]{Ribbon nerves $\rbNrv E,\rbNrv E'$ on disjoint CW spaces $K,K'$, respectively}
\label{fig:separatedRibbonNerves}
\end{figure}

\begin{example}
Let $K$ in Fig.~\ref{fig:rbNrvE1} be a collection of planar ribbons equipped with $\dxnear$, forming a ribbon nerve $\rbNrv E$ in an approximate descriptive space $\left(K,\dxnear\right)$.  Similarly, let $K'$ in Fig.~\ref{fig:rbNrvE2} be a collection of planar ribbons equipped with $\dxnear$, forming a ribbon nerve $\rbNrv E'$ an approximate descriptive space $\left(K',\dxnear\right)$.  Also let $\left(K\cup K',\dxnear\right)$ be an approximate descriptive space.

Recall that Betti number $\mathcal{B}_2$ is a count of the number of holes in a cell complex.  Then let $\Phi(\rbNrv E) = \mathcal{B}_2$ and let $\Phi(\rbNrv E') = \mathcal{B}_2$,
{\em i.e.}, each of the ribbon nerves in Fig.~\ref{fig:separatedRibbonNerves} has description equal to the number of ribbon surface holes.
Ribbon nerve $\rbNrv E'$ is described in a similar fashion. Further, let threshold $th > 1$.  Then we have
\begin{align*}
\rbNrv E\ &\dxnear\ \rbNrv E',\ \mbox{since, for $th > 0$,}\\
\Phi(\rbNrv E) &= (\mathcal{B}_2(\rbNrv E)) = \Phi(\rbNrv E') = 6.
\end{align*}

That is, $\norm{\Phi(\rbNrv E) - \Phi(\rbNrv E')} < th$ for every choice of $th > 0$.
\qquad \textcolor{blue}{\Squaresteel}
\end{example}

\begin{theorem}\label{theorem:rbNrvResult2}
Let $K$ be a collection of planar ribbon nerves equipped with the proximity $\dxnear$, $\left(K,\dxnear\right)$ an approximate descriptive proximity space, $\rbNrv E, \rbNrv E'\in K, th > 0$.  $\rbNrv E\ \dxnear\ \rbNrv E'$, if and only if $\Phi(\rbNrv E) = \Phi(\rbNrv E')$.   
\end{theorem}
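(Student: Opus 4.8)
The plan is to prove the biconditional by splitting it into its two implications and working directly from the defining condition of $\dxnear$, namely $\rbNrv E\ \dxnear\ \rbNrv E'$ exactly when $\norm{\Phi(\rbNrv E) - \Phi(\rbNrv E')} < th$, together with the fact that a ribbon-nerve description $\Phi$ supplied by a Betti number (in the ambient examples $\Phi = \mathcal{B}_2$, the hole count) takes values in the non-negative integers.

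First I would dispatch the ``if'' direction ($\Leftarrow$). Assuming $\Phi(\rbNrv E) = \Phi(\rbNrv E')$, the feature vectors coincide, so
\[
\norm{\Phi(\rbNrv E) - \Phi(\rbNrv E')} = 0 < th
\]
for every admissible threshold $th > 0$. By the defining condition of $\dxnear$ this yields $\rbNrv E\ \dxnear\ \rbNrv E'$ at once, and equivalently $\rbNrv E\ \dxcap\ \rbNrv E' \neq \emptyset$ by Corollary~\ref{cor:dP2result}. This direction is routine and needs no special structure on $\Phi$.

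Next I would treat the ``only if'' direction ($\Rightarrow$), which carries the real content. Assume $\rbNrv E\ \dxnear\ \rbNrv E'$, so that $\norm{\Phi(\rbNrv E) - \Phi(\rbNrv E')} < th$. Here I would invoke that the ribbon-nerve description is a Betti number, hence integer-valued, so $\Phi(\rbNrv E) - \Phi(\rbNrv E')$ is a difference of non-negative integers and its norm is itself a non-negative integer. Following the reasoning displayed in the example preceding the statement --- where the proximity is seen to hold ``for every choice of $th > 0$'' --- the inequality persists down to arbitrarily small thresholds, which forces the integer norm to vanish. Hence $\Phi(\rbNrv E) = \Phi(\rbNrv E')$, closing the equivalence.

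The hard part will be precisely this ``only if'' direction, and specifically the justification that descriptive closeness upgrades to \emph{equality} of descriptions. For a fixed positive threshold the raw definition of $\dxnear$ does not on its own force $\norm{\Phi(\rbNrv E) - \Phi(\rbNrv E')} = 0$; the step is legitimate only because the ribbon-nerve feature values are discrete, so any nonzero gap is at least $1$ and cannot be squeezed below an arbitrarily small threshold. I would therefore make explicit that $\Phi$ ranges in $\mathbb{Z}_{\geq 0}$ and that closeness is read in the robust, all-thresholds sense of the preceding example; without this discreteness the biconditional would weaken to the statement that the two descriptions merely differ by less than $th$.
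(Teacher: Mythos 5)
Your proof is sound, but it is considerably more careful than the paper's, which disposes of this theorem in a single line: ``Immediate from the definition $\dxnear$ on pairs of ribbon nerves in $K$.'' That bare appeal to the definition only yields the equivalence of $\rbNrv E\ \dxnear\ \rbNrv E'$ with $\norm{\Phi(\rbNrv E)-\Phi(\rbNrv E')} < th$, which is strictly weaker than the claimed equality $\Phi(\rbNrv E)=\Phi(\rbNrv E')$; the paper never explains how a strict inequality against one threshold upgrades to equality of descriptions. You supply exactly the missing bridge. Your backward direction (equality gives norm $0 < th$) is the trivial half and is presumably all the paper has in mind; your forward direction --- invoking the integer-valuedness of Betti-number descriptions together with the all-thresholds reading suggested by the example preceding the theorem, so that a nonzero gap, being at least $1$, cannot be squeezed below arbitrarily small $th$ --- is the real content, and it is absent from the paper. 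Your closing caveat is also exactly right and worth stressing: as literally stated, with a single fixed but unconstrained $th>0$, the ``only if'' direction fails (take $th=2$ and two nerves whose hole counts differ by $1$: they satisfy $\rbNrv E\ \dxnear\ \rbNrv E'$ yet $\Phi(\rbNrv E)\neq\Phi(\rbNrv E')$). So the theorem holds only under the repair you make explicit --- discrete-valued $\Phi$ with $th\leq 1$, or quantification over all $th>0$ --- a hypothesis the paper leaves tacit. In short, where the paper asserts, you prove, and your discreteness argument is what makes the biconditional true rather than merely plausible.
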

\begin{proof}
Immediate from the definition $\dxnear$ on pairs of ribbon nerves in $K$. 
\end{proof}

\section{Main Results}
This section gives some main results for ribbon complexes.

\subsection{Ribbon division of the plane into three bounded regions and Brouwer fixed points on ribbons}$\mbox{}$\\
L.E.J. Brouwer~\cite{Brouwer1910threeDisjointSurfaceRegions} introduced a curve which divides the plane into three open sets and provides a boundary of each of the regions.  An important result for a ribbon complex in a CW space is the division of a finite bounded region of the plane into three bounded regions.

\begin{theorem}\label{thm:ribbonPlanarRegions}
A ribbon in the interior of a finite, bounded region of the plane divides the region into three disjoint bounded regions.
\end{theorem}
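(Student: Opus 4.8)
The plan is to exploit the fact that a ribbon $\rb E$ is bounded by two nested simple closed curves and to invoke the Jordan Curve Theorem for each of them. I would recall from the ribbon definition that $\rb E$ arises from a pair of nesting, non-concentric filled cycles $\cyc A,\cyc B$ with $\cyc B$ in the interior of $\cyc A$, and that both the outer boundary $\bdy(\cyc A)$ and the inner boundary $\bdy(\cyc B)$ are simple, closed curves. By the Jordan Curve Theorem, $\bdy(\cyc A)$ separates the plane into the bounded region $\Int(\cyc A)$ and its unbounded exterior, and likewise $\bdy(\cyc B)$ separates the plane into $\Int(\cyc B)$ and its exterior; in the polygonal (CW) setting these separations are elementary.

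Next, using the nesting hypothesis $\bdy(\cyc B)\subset\Int(\cyc A)$, I would identify the three pairwise-disjoint open regions cut out by the two boundary curves: the innermost hole $R_1=\Int(\cyc B)$; the open annular region $R_2=\Int(\cyc A)\setminus\cl(\cyc B)$, which is exactly the interior $\Int(\rb E)$ of the ribbon; and the outermost region $R_3$, the exterior of $\cyc A$. To secure boundedness, I would invoke the hypothesis that $\rb E$ lies in the interior of a finite, bounded region $D$: then the outermost piece is $R_3 = D\setminus\cl(\cyc A)$, bounded as a subset of $D$, while $R_1,R_2\subset\Int(\cyc A)\subset D$ are bounded as well. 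Disjointness of $R_1,R_2,R_3$ is immediate from their set-theoretic description.

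Finally, I would confirm that $R_1,R_2,R_3$ exhaust $D\setminus(\bdy(\cyc A)\cup\bdy(\cyc B))$ and are the only three components. The main obstacle is the middle step: showing that the annular region $R_2$ trapped between the two nested Jordan curves is a single connected component rather than several. I expect to settle this by appealing to the path-connectedness of the filled cycles together with the standard fact that the region between two nested simple closed curves is itself connected, so that the ribbon divides $D$ into exactly three disjoint bounded regions.
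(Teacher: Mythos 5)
Your proof is correct, but it takes a genuinely different route from the paper's. The paper never invokes the Jordan Curve Theorem: its proof simply names three sets --- $\pi_1 = \pi\setminus \cl(\rb E)$ (the part of $\pi$ outside the ribbon), $\pi_2 = \cl(\rb E)\setminus \cl(\cyc B)$, and $\pi_3 = \cl(\cyc B)$ --- and verifies disjointness purely set-theoretically, checking $\pi_1\cap\pi_2=\emptyset$ and $\pi_2\cap\pi_3=\emptyset$. So the paper's pieces are closed or half-open and together cover all of $\pi$, whereas yours are the three \emph{open} complementary components of the two boundary curves inside $D$, namely $R_1=\Int(\cyc B)$, $R_2=\Int(\cyc A)\setminus\cl(\cyc B)$, $R_3=D\setminus\cl(\cyc A)$. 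Your version buys three things the paper does not supply: an explicit boundedness argument (containment in $D$), a justification that the annular piece is a single connected component (via the polygonal Schoenflies/nested-Jordan-curve fact, which is indeed elementary in this CW setting and is the real content of calling the pieces ``regions''), and a cleaner allocation of the hole --- note that since $\cl(\rb E)$ excludes $\Int(\cyc B)$, the paper's $\pi_1=\pi\setminus\cl(\rb E)$ as written contains $\Int(\cyc B)$ and therefore meets $\pi_3=\cl(\cyc B)$, an intersection the paper's proof never checks; your decomposition sidesteps this entirely. The trade-off is that the paper's argument is shorter and needs no curve topology at all, while your Jordan-curve argument is the one that actually matches the Brouwer-style division asserted in Theorem~\ref{thm:BrouwerOpenSets} and delivers the regions as bounded, connected open sets.
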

\begin{proof}
Let $\rb E$ be a ribbon containing nesting cycles $\cyc A, \cyc B$ with $\bdy(\cl(\cyc B)) \subset \Int(\cl(\cyc A))$ on $\pi$, a finite, bounded region of the plane $\pi$. The planar region $\pi_1 = \pi\setminus \cl(\rb E)$ is that part of $\pi$ outside the ribbon.  A second region of the plane $\pi_2\subset \pi$ is the closure of $\rb E$ minus the closure of its inner cycle $\cl(\cyc B)$, defined by
\[ 
\pi_2 = \cl(\rb E)\setminus \cl(\cyc B). 
\]
$\pi_1\cap \pi_2 = \emptyset$, since ribbon $\rb E$ is not included in $\pi_1$.
A third region $\pi_3\subset \pi$ is the closure of $\cyc B$, {\em i.e.}, $\pi_3 = \cl(\cyc B)$.  $\pi_2\cap \pi_3 = \emptyset$, since $\cl(\cyc B)$ is not included in $\pi_2$.  Hence, these three planar regions are disjoint.
\end{proof}

It is also the case that a planar ribbon has the division property of the curve discovered by Brouwer.

\begin{theorem}\label{thm:BrouwerOpenSets}
A planar ribbon divides the plane into three open sets and provides a boundary of each of the three planar regions.
\end{theorem}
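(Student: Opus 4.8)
The plan is to reduce the statement to two applications of the Jordan Curve Theorem, exploiting the fact already recorded in the text that the outer and inner boundaries of a ribbon are simple, closed curves. Write $\rb E$ for the ribbon with nesting cycles $\cyc A$ (outer) and $\cyc B$ (inner), and set $J_A = \bdy(\cl(\cyc A))$ and $J_B = \bdy(\cl(\cyc B))$ for the two Jordan curves that together make up $\bdy(\rb E)$. The nesting hypothesis gives $\cl(\cyc B)\subset \Int(\cl(\cyc A))$, so $J_B$ lies strictly inside $J_A$.

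First I would apply the Jordan Curve Theorem to $J_A$: it separates the plane $\pi$ into the unbounded exterior open set and the bounded interior $\Int(\cl(\cyc A))$, with $J_A$ as their common boundary. The same theorem applied to $J_B$ separates $\pi$ into the exterior of $J_B$ and the bounded interior $\Int(\cl(\cyc B))$, with $J_B$ as common boundary. Using the nesting relation I then define the three open sets
\[
U_1 = \pi\setminus \cl(\cyc A),\quad U_2 = \Int(\cl(\cyc A))\setminus \cl(\cyc B),\quad U_3 = \Int(\cl(\cyc B)).
\]
Each $U_i$ is open, being an intersection of an open set with the complement of a closed set, and the nesting $\cl(\cyc B)\subset \Int(\cl(\cyc A))$ forces them to be pairwise disjoint. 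One then checks that $\pi\setminus(J_A\cup J_B) = U_1\cup U_2\cup U_3$, so that removing the ribbon boundary leaves exactly these three regions.

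The remaining step, and the main obstacle, is to verify that the ribbon provides a boundary of each region and to pin down which curve bounds which component. Here I would establish $\bdy(U_1) = J_A$, $\bdy(U_3) = J_B$, and, crucially, $\bdy(U_2) = J_A\cup J_B = \bdy(\rb E)$, so that the open annular interior of the ribbon is bounded by the full ribbon boundary while the outer (resp. inner) curve alone bounds the unbounded exterior (resp. the innermost disk). The delicate part is confirming that $U_2$ is connected, so that it is a single region, and that $J_B$ lies in the closure of both $U_2$ and $U_3$ but not of $U_1$; this is what prevents the two Jordan curves from being conflated and makes the decomposition into exactly three regions, each boundaried by the ribbon, unambiguous. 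Since this is the unbounded-plane analogue of Theorem~\ref{thm:ribbonPlanarRegions}, the set-theoretic bookkeeping there can be reused almost verbatim, with the Jordan Curve Theorem supplying the connectedness and common-boundary facts that the bounded version obtained by direct inspection.
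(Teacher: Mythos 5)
Your proof is correct, but it takes a genuinely different route from the paper. The paper's entire proof of Theorem~\ref{thm:BrouwerOpenSets} is a one-line appeal to Theorem~\ref{thm:ribbonPlanarRegions}: a ribbon divides a bounded planar region into three disjoint regions, hence (the paper asserts) it divides $\mathbb{R}^2$ into three open sets with boundaries. You instead build the decomposition from scratch by applying the Jordan Curve Theorem twice, once to each of the two simple closed curves $J_A = \bdy(\cl(\cyc A))$ and $J_B = \bdy(\cl(\cyc B))$, and defining $U_1 = \pi\setminus \cl(\cyc A)$, $U_2 = \Int(\cl(\cyc A))\setminus \cl(\cyc B)$, $U_3 = \Int(\cl(\cyc B))$. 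This buys you something the paper's citation does not actually deliver: the three regions constructed in the proof of Theorem~\ref{thm:ribbonPlanarRegions} are $\pi_1 = \pi\setminus\cl(\rb E)$, $\pi_2 = \cl(\rb E)\setminus\cl(\cyc B)$ and $\pi_3 = \cl(\cyc B)$, and the latter two contain boundary points, so they are \emph{not} open; moreover that proof says nothing about which curve bounds which region. Your redefinition strips the curves $J_A\cup J_B$ out of the regions, making openness immediate, and the Jordan Curve Theorem supplies the common-boundary identifications $\bdy(U_1)=J_A$, $\bdy(U_3)=J_B$, $\bdy(U_2)=J_A\cup J_B$ that the theorem statement promises but the paper never verifies. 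The one step you flag but do not complete---connectedness of the annular region $U_2$---does require more than the Jordan Curve Theorem alone (Schoenflies, or the standard annulus argument), though it is not strictly demanded by the statement, which asks only for three open sets, each with a boundary provided by the ribbon. In short, your argument is longer than the paper's but is the more defensible of the two; the paper's reduction to the bounded case inherits non-open regions and an unaddressed boundary claim.
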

\begin{proof}
Let $\rb E$ be a ribbon on $\mathbb{R}^2$.  From Theorem~\ref{thm:ribbonPlanarRegions},
$\rb E$ divides the plane into three open sets and provides a boundary of each of the three planar regions.
\qquad \textcolor{blue}{\Squaresteel}
\end{proof}

\begin{remark} \emph{Approximate Descriptive Proximity of Brouwer Planar Regions}.\\  
Let $B = \left\{\pi_1,\pi_2,\pi_3\right\}$ be collection of planar regions from the proof of Theorem~\ref{thm:ribbonPlanarRegions}, equipped with the proximity $\dxnear$.  Also, let $\mathcal{B}_1$ be the Betti number with is a count of the number cycles in a cell complex and introduce threshold $0 < th \leq 1$.  For $E\in B$, let
$\Phi(E) = \mathcal{B}_1(E)$ and choose $th = 1$.  From this, we obtain
\begin{compactenum}[1$^o$]
\item $\pi_1\ \not{\dxnear}\ \pi_2$, since $\norm{\Phi(\pi_1)-\Phi(\pi_2)}\nless 1$.
\item $\pi_1\ \not{\dxnear}\ \pi_3$, since $\norm{\Phi(\pi_1)-\Phi(\pi_3)}\nless 1$.
\item $\pi_2\ \dxnear\ \pi_3$, since $\norm{\Phi(\pi_2)-\Phi(\pi_3)} < 1$.
\end{compactenum}
For $th > 1$, all three Brouwer planar regions from theorem~\ref{thm:ribbonPlanarRegions} do have approximate descriptive proximity.
\qquad \textcolor{blue}{\Squaresteel}
\end{remark}

\begin{figure}[!ht]
\centering
\begin{pspicture}
(3,2)
\psline*[linestyle=solid,linecolor=green!30]%
(0,0)(1,0.5)(2.0,0.0)(3.0,0.5)(3.0,1.5)(2.0,2.0)(1,1.5)(0,2)
(-1,1.5)(-1,0.5)(0,0)
\psline[linestyle=solid,linecolor=black]%
(0,0)(1,0.5)(2.0,0.0)(3.0,0.5)(3.0,1.5)(2.0,2.0)(1,1.5)(0,2)
(-1,1.5)(-1,0.5)(0,0)
\psdots[dotstyle=o,dotsize=2.2pt,linewidth=1.2pt,linecolor=black,fillcolor=gray!80]%
(0,0)(1,0.5)(2.0,0.0)(3.0,0.5)(3.0,1.5)(2.0,2.0)(1,1.5)(0,2)
(-1,1.5)(-1,0.5)(0,0)
\psdots[dotstyle=o,dotsize=5.2pt,linewidth=1.2pt,linecolor=black,fillcolor=gray!80]%
(-0.8,1.3)(-0.8,0.8)(0,1.8)
\psline*[linestyle=solid,linecolor=white]%
(0,0.25)(1,0.75)(2.0,0.25)(2.5,0.5)(2.5,1.0)(2.0,1.35)(1,1.25)(0,1.5)
(-.55,1.25)(-.55,0.75)(0,0.25)
\psline[linestyle=solid,linecolor=black]%
(0,0.25)(1,0.75)(2.0,0.25)(2.5,0.5)(2.5,1.0)(2.0,1.35)(1,1.25)(0,1.5)
(-.55,1.25)(-.55,0.75)(0,0.25)
\psline[linestyle=solid,linecolor=black]%
(1,0.75)(1,0.5)
\psdots[dotstyle=o,dotsize=2.2pt,linewidth=1.2pt,linecolor=black,fillcolor=gray!80]%
(0,0.25)(1,0.75)(2.0,0.25)(2.5,0.5)(2.5,1.0)(2.0,1.35)(1,1.25)(0,1.5)
(-.55,1.25)(-.55,0.75)(0,0.25)
\psdots[dotstyle=o,dotsize=2.5pt,linewidth=1.2pt,linecolor=black,fillcolor=red!80]%
(1,0.75)
\psdots[dotstyle=o,dotsize=2.5pt,linewidth=1.2pt,linecolor=black,fillcolor=black!80]%
(1,0.48)
\rput(1,0.9){\footnotesize $\boldsymbol{p}$}
\rput(1,0.3){\footnotesize $\boldsymbol{q}$}
\rput(1,1.75){\footnotesize $\boldsymbol{rb E}$}
\rput(2.8,1.85){\footnotesize $\boldsymbol{cyc A}$}
\rput(2.5,1.25){\footnotesize $\boldsymbol{cyc B}$}
\end{pspicture}
\caption[]{\textcolor{gray}{\large $\boldsymbol{\bullet}$} = ribbon hole, and
$
\overbrace{f(p\in \arc{pq}) = \arc{pq}\cap \bdy(\cl(\cyc B) = p).}^{\mbox{\textcolor{blue}{\bf Mapping from $p\in \arc{pq}$ to fixed point $p$ on ribbon $\rb E$}}}
$ 
}
\label{fig:ribbonFixedPoint}
\end{figure}

\begin{theorem}\label{thm:Brouwer}{Brouwer Fixed Point Theorem~\cite[\S 4.7, p. 194]{Spanier1966AlgTopology}}$\mbox{}$\\
Every continuous map from $\mathbb{R}^n$ to itself has a fixed point.
\end{theorem}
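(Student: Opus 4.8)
The statement is the classical Brouwer Fixed Point Theorem, quoted here from Spanier; Brouwer's theorem concerns a continuous self-map of the closed $n$-ball $D^n \subset \mathbb{R}^n$ (equivalently, of any compact convex subset of $\mathbb{R}^n$, which is the form relevant to the compact ribbon domains of Theorem~\ref{thm:ribbonFixedPoint}). The plan is to establish it by the standard no-retraction argument from algebraic topology, which is the route taken in the cited source.

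First I would argue by contradiction: suppose $f : D^n \to D^n$ is continuous with $f(x) \neq x$ for every $x \in D^n$. Since $x$ and $f(x)$ are then always distinct, I would define $r : D^n \to S^{n-1}$ by sending each $x$ to the point at which the ray from $f(x)$ through $x$ meets the boundary sphere $S^{n-1} = \bdy(D^n)$. Solving for this endpoint is a quadratic whose relevant root depends continuously on the pair $(x, f(x))$, with the discriminant controlled by $\norm{x - f(x)} > 0$; hence $r$ is continuous, and $r(x) = x$ for $x \in S^{n-1}$, so $r$ is a retraction of $D^n$ onto its boundary.

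Next I would rule out such a retraction. Let $\iota : S^{n-1} \hookrightarrow D^n$ be the inclusion, so that $r \circ \iota = \mathrm{id}_{S^{n-1}}$. Applying the reduced homology functor $\tilde{H}_{n-1}(\,\cdot\,)$ yields a factorization of the identity on $\tilde{H}_{n-1}(S^{n-1}) \cong \mathbb{Z}$ through $\tilde{H}_{n-1}(D^n) = 0$; an identity homomorphism on $\mathbb{Z}$ cannot factor through the trivial group, so no fixed-point-free $f$ exists. (For $n = 1$ the same contradiction is obtained with $\pi_0$, using that $S^0$ is disconnected while $D^1$ is connected.)

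The only genuine work is the homological input behind the no-retraction step, namely $\tilde{H}_{n-1}(S^{n-1}) \cong \mathbb{Z}$ together with the contractibility of $D^n$; this is precisely the computation I would quote from Spanier rather than reprove. In the planar case $n = 2$ directly relevant to ribbons, I could instead avoid homology in a manner consonant with the paper's CW framework: triangulate $D^2$, invoke Sperner's lemma to produce arbitrarily fine simplices whose vertices carry all three colors, and use compactness to extract a point forced to satisfy $f(x) = x$. Either route delivers a fixed point on the compact convex domain, which is all that the ribbon applications require.
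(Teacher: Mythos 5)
Your proof is correct, but there is essentially nothing in the paper to compare it against: the paper states this theorem purely as a citation to Spanier and supplies no proof, so any argument you give is ``a different route'' by default. The route you chose is in fact the one in the cited source: assuming a fixed-point-free $f:D^n\to D^n$, the ray map $r(x)$ is continuous (your quadratic-discriminant remark, with $\norm{x-f(x)}>0$, is the right justification) and restricts to the identity on $S^{n-1}$, and the factorization of the identity on $\tilde{H}_{n-1}(S^{n-1})\cong\mathbb{Z}$ through $\tilde{H}_{n-1}(D^n)=0$ kills it; your $\pi_0$ patch for $n=1$ and the Sperner alternative for $n=2$ are both sound. The most valuable thing you did, which the paper does not, is repair the statement itself: as printed, ``every continuous map from $\mathbb{R}^n$ to itself has a fixed point'' is false --- the translation $x\mapsto x+v$ with $v\neq 0$ has none, and $\mathbb{R}^n$ is not compact --- whereas Brouwer's theorem, and Spanier's actual statement, concern self-maps of the closed ball (equivalently any compact convex set). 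One caution on your closing remark: a planar ribbon is a closed annular region, which is compact but \emph{not} convex and not homeomorphic to a disk, and the annulus fails the fixed-point property (a small rotation is fixed-point free); so the corrected Brouwer theorem does not by itself deliver Theorem~\ref{thm:ribbonFixedPoint} as the paper claims. That, however, is a defect in the paper's downstream application rather than in your proof of the present statement.
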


\begin{theorem}\label{thm:ribbonFixedPoint}{Ribbon Fixed Point Theorem\textsc{}}$\mbox{}$\\
A continuous map on a ribbon on $\mathbb{R}^2$ to itself has a fixed point.
\end{theorem}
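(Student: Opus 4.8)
The plan is to obtain the fixed point directly from the Brouwer Fixed Point Theorem (Theorem~\ref{thm:Brouwer}), precisely as announced in the introduction. First I would record the ribbon as a planar set: by the defining construction, $\rb E = \cl(\rb E)$ is the closure of the region caught between the outer cycle $\cyc A$ and the inner cycle $\cyc B$, retaining the inner boundary $\bdy(\cl(\cyc B))$ while deleting $\Int(\cyc B)$. Being the closure of a bounded region, $\rb E$ is a nonempty, closed, bounded --- hence compact --- subset of $\mathbb{R}^2$, whose frontier is the pair of simple closed curves $\bdy(\cl(\cyc A))$ and $\bdy(\cl(\cyc B))$. I would then fix an arbitrary continuous self-map $f \colon \rb E \longrightarrow \rb E$.

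The core step is to present $f$ as a continuous self-map of a compact planar set and to apply Theorem~\ref{thm:Brouwer}. Here I would invoke the three disjoint bounded regions $\pi_1, \pi_2, \pi_3$ of Theorem~\ref{thm:ribbonPlanarRegions}, identifying $\rb E$ with $\pi_2$ together with its inner boundary, so that $\rb E \subset \mathbb{R}^2$ is the compact region carrying $f$. Since $f$ is continuous and maps this compact planar set into itself, Theorem~\ref{thm:Brouwer} yields a point $p \in \rb E$ with $f(p) = p$. The illustration in Fig.~\ref{fig:ribbonFixedPoint}, in which the fixed point $p = \arc{pq} \cap \bdy(\cl(\cyc B))$ lands on the inner boundary, exhibits a typical position of such a $p$ and would serve as the guiding example.

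The hard part will be justifying that Brouwer's conclusion is legitimately available for the ribbon domain. A ribbon is not a single filled $2$-cell: it retains a deleted inner disk and is therefore an annular region, so the convex/disk form of the fixed point theorem cannot be quoted verbatim. The argument must instead rest on the form recorded in Theorem~\ref{thm:Brouwer} --- a continuous self-map of a planar region --- and verify, from compactness of $\rb E$ together with continuity of $f$, that its hypotheses are met by $f$ on $\rb E$. Once that verification is in place, the existence of a point $p$ with $f(p) = p$ is immediate, and the Ribbon Fixed Point Theorem (Theorem~\ref{thm:ribbonFixedPoint}) follows.
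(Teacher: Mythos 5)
Your proposal follows the same route as the paper: the paper's entire proof is the one-line appeal to Theorem~\ref{thm:Brouwer} that you announce at the outset. But the ``hard part'' you honestly flag at the end is not a detail to be checked --- it is a genuine gap that cannot be closed, in your write-up or in the paper's. A ribbon $\rb E$ is, by construction, the closed region between $\bdy(\cl(\cyc A))$ and $\bdy(\cl(\cyc B))$ with $\Int(\cyc B)$ deleted; it is homeomorphic to a closed annulus $S^1\times[0,1]$, and the closed annulus does \emph{not} have the fixed point property. Transporting a rotation by a nonzero angle through any homeomorphism between the round annulus and the polygonal ribbon produces a continuous map $f:\rb E\longrightarrow \rb E$ with no fixed point. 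Your proposed verification step --- ``from compactness of $\rb E$ together with continuity of $f$, check that the hypotheses are met'' --- is exactly where the argument dies: compactness and continuity are not the hypotheses of Brouwer's theorem; convexity (or at least being homeomorphic to a closed ball) is essential, and you yourself correctly observe that the ribbon is annular, hence not of that form. From the Lefschetz point of view, $\rb E$ has the homotopy type of a circle, $H_1(\rb E)\neq 0$, and a rotation-like map has Lefschetz number zero, so no fixed point is forced.

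Note also that Theorem~\ref{thm:Brouwer} as recorded in the paper (``every continuous map from $\mathbb{R}^n$ to itself has a fixed point'') is itself misstated --- a translation of $\mathbb{R}^n$ is fixed-point-free; the correct statement concerns the closed $n$-ball or a compact convex set --- so the ``form recorded in Theorem~\ref{thm:Brouwer}'' on which you propose to rest the argument provides nothing for a non-convex compact domain. The upshot: the statement of Theorem~\ref{thm:ribbonFixedPoint} is false for arbitrary continuous self-maps of a ribbon, and both your proof and the paper's share the same unfixable defect. A correct result would need an added hypothesis, e.g., restricting to maps that are null-homotopic on $\rb E$ (so that the Lefschetz number is nonzero), or to maps that extend to the filled cycle $\cl(\cyc A)$ and leave it invariant; neither restriction appears in the statement you were asked to prove. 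You deserve credit for identifying precisely where the danger lies --- the paper's proof does not even acknowledge it --- but the verification you defer is one that cannot be carried out.
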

\begin{proof}
Let $\rb E$ be a ribbon on $\mathbb{R}^2$. From Theorem~\ref{thm:Brouwer}, each continuous map $f:\rb E\longrightarrow \rb E$ has a fixed point.
\end{proof}

\begin{example}
Let $K\subset \mathbb{R}^2$ be a CW complex and let $\arc{pq}$ be an edge attached between a vertex $q$ on the outer $\bdy(\cl(\cyc A))$ and vertex $p$ on the inner $\bdy(\cl(\cyc B))$ on a planar ribbon $\rb E$ with cycles $\cyc A,\cyc B\in \rb E$, $\Int(\cyc A)\supset \bdy(\cyc B)$ as shown in Fig.~\ref{fig:ribbonFixedPoint}, {\em i.e.}, the boundary of cycle $\cyc B$ is a subset of the interior of cycle $\cyc A$.  
Then let the continuous map $f:\rb E\longrightarrow \rb E$ be defined by
\[
f(p\in \rb E) = \arc{pq}\cap \bdy(\cl(\cyc B).
\]
From Theorem~\ref{thm:ribbonFixedPoint}, the mapping $f$ has a fixed point. Let $\arc{pq}$ be an edge with vertex $p\in \bdy(\cl(\cyc B)), q\in \bdy(\cl(\cyc A))$ as shown in Fig.~\ref{fig:ribbonFixedPoint}.  The mapping $f$ maps vertex $p\in\arc{pq}$ to $\arc{pq}\cap \bdy(\cl(\cyc B) = p$, a fixed point on the boundary of $\cyc B$. That is, $f$ maps vertex $p\in \arc{pq}$ to $\arc{pq}\cap \bdy(\cl(\cyc B) = p$, which equals $p$.  Hence, $f(p\in \mathbb{R}^2) = p\in \mathbb{R}^2$, which is the desired result.
\qquad \textcolor{blue}{\Squaresteel}
\end{example}

The search for fixed points on ribbons, ribbon nerves and vortex nerves can be simplified as a consequence of Lemma~\ref{lemma:CWcomplex}, which is easily proved.

\begin{lemma}\label{lemma:CWcomplex}
A map from a nonempty CW complex to itself has a fixed point.
\end{lemma}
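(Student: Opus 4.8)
The plan is to reduce the claim to the Brouwer Fixed Point Theorem (Theorem~\ref{thm:Brouwer}), which is already available in this paper. First I would fix a nonempty CW complex $K$ together with a continuous self-map $f\colon K\longrightarrow K$, and recall that the finite planar cell complexes under consideration here are realized as compact subsets of some Euclidean space $\mathbb{R}^n$ (in fact $\mathbb{R}^2$ for the ribbons, ribbon nerves and vortex nerves that motivate the lemma). The objective is then to exhibit a point $x_0\in K$ with $f(x_0)=x_0$, and the entire difficulty lies in importing Brouwer's hypotheses, which govern a ball rather than an arbitrary $K$.

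Second, I would enclose $K$ inside a closed Euclidean ball $B\subset\mathbb{R}^n$ chosen large enough to contain $K$, so that $B$ is a convex compact set to which Theorem~\ref{thm:Brouwer} applies directly. The idea is to manufacture a continuous self-map $g\colon B\longrightarrow B$ whose Brouwer fixed point can be transported back into $K$. The natural candidate is the composite $g = f\circ r$, where $r\colon B\longrightarrow K$ is a retraction of the ball onto $K$. A fixed point $x_0$ of $g$ then satisfies $x_0 = f(r(x_0))\in K$, hence $r(x_0)=x_0$ and $f(x_0)=x_0$, delivering the desired fixed point inside $K$. With such an $r$ in hand, the proof collapses to a single invocation of Theorem~\ref{thm:Brouwer}.

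The hard part — and the genuine content of the lemma — will be supplying the retraction $r\colon B\longrightarrow K$, or some equivalent device that forces the Brouwer fixed point of $g$ to land in $K$. Finite CW complexes are absolute neighbourhood retracts, so one always obtains a retraction from a \emph{neighbourhood} of $K$ onto $K$; the obstacle is extending this to a retraction of the whole ball $B$, and this is precisely where the topology of $K$ intervenes. I expect this retraction step to be the crux: it succeeds automatically for the disk-like ribbons already treated in Theorem~\ref{thm:ribbonFixedPoint}, where $\cl(\rb E)$ sits in the plane as a region to which $B$ can be pushed in, and more generally it is the point at which any fully general argument must confront the standard fixed-point obstruction. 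My plan, therefore, is to carry the Brouwer reduction through the composite $f\circ r$ and to concentrate the work on constructing the retraction $r$, treating its existence as the single step on which the lemma turns.
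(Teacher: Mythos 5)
Your transport logic is sound as far as it goes: if a retraction $r\colon B\longrightarrow K$ existed, then $g=f\circ r$ would be a continuous self-map of the ball, Brouwer would yield $x_0=g(x_0)=f(r(x_0))\in K$, hence $r(x_0)=x_0$ and $f(x_0)=x_0$. You are also right that the retraction is the crux. The gap is that this crux is unfillable, because the lemma as stated is false: the circle $S^1$ is a CW complex, and any nontrivial rotation of $S^1$ is a continuous self-map with no fixed point. Worse for the intended applications, a planar ribbon $\cl(\rb E)$ is an annulus --- by construction $\Int(\cyc B)$ is removed, so $\cl(\rb E)$ deformation retracts onto its inner boundary cycle and is homotopy equivalent to $S^1$ --- and a rotation of an annulus is likewise fixed-point free. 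So your parenthetical claim that the retraction step ``succeeds automatically for the disk-like ribbons'' is exactly backwards: ribbons are precisely not disk-like, and no retraction $r\colon B\longrightarrow K$ can exist for them, since a retract of a ball is contractible (compose $r$ with a contraction of $B$ to contract $K$) and automatically inherits the fixed point property you are trying to prove. Your plan can therefore only ever establish the lemma for contractible complexes; the general statement needs a genuine hypothesis, such as contractibility or a nonvanishing Lefschetz number (Lefschetz fixed point theorem for compact ANRs), and no choice of $r$ circumvents this. Note also that the lemma says ``a map,'' not ``a continuous map''; even on a closed disk an arbitrary discontinuous self-map need not fix a point, so continuity must be added before anything can be proved.

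For comparison with the source: the paper supplies no proof at all --- it asserts only that the lemma ``is easily proved'' --- so there is no argument for yours to diverge from. Your proposal has the merit of locating exactly where any honest proof must do its work (and the same defect infects the paper's Theorem~\ref{thm:ribbonFixedPoint}, whose appeal to Brouwer presupposes that a ribbon behaves like a ball), but the work cannot be completed: the statement must be weakened --- e.g., to contractible finite CW complexes, or to self-maps with $\Lambda(f)\neq 0$ --- before your retraction scheme, or any other, can close it.
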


\begin{theorem}\label{thm:rbFixedPoint}$\mbox{}$\\
A map on a nonempty ribbon complex to itself has a fixed point.
\end{theorem}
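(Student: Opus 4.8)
The plan is to treat this statement as the complex-level counterpart of the Ribbon Fixed Point Theorem~\ref{thm:ribbonFixedPoint} and to reduce it to the CW-complex fixed point result, Lemma~\ref{lemma:CWcomplex}. The key observation I would establish first is structural: by Definition~\ref{def:ribbonComplex}, a ribbon complex $\rbx K$ is a non-void collection of countable planar ribbons in a CW space, and each constituent planar ribbon $\rb E$ (Def.\ of Planar Ribbon) is the closure of a nesting pair of filled cycles and hence is itself a bounded CW subcomplex of $K\subset \mathbb{R}^2$. Since the cells of $\rbx K$ satisfy the Alexandroff-Hopf-Whitehead containment and intersection conditions by virtue of living inside the ambient CW space $K$, the collection $\rbx K$ is a nonempty CW complex.

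Having identified $\rbx K$ as a nonempty CW complex, the second step is simply to invoke Lemma~\ref{lemma:CWcomplex}, which asserts that a map from a nonempty CW complex to itself has a fixed point. A continuous self-map $f:\rbx K \longrightarrow \rbx K$ therefore has a fixed point, which is exactly the claim. I would phrase this as a one-line consequence, mirroring the style used for Theorem~\ref{thm:ribbonFixedPoint}, where the Ribbon Fixed Point Theorem is deduced directly from the Brouwer Fixed Point Theorem~\ref{thm:Brouwer}.

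As a sanity check, and to make the reduction robust, I would also sketch the ribbon-by-ribbon alternative: any self-map of the complex restricts, on at least one ribbon $\rb E \in \rbx K$, to a self-map of that ribbon, and by Theorem~\ref{thm:ribbonFixedPoint} that restriction has a fixed point $x$ with $f(x)=x$; this $x$ is then a fixed point of the whole complex. This gives the same conclusion without leaning on the full strength of Lemma~\ref{lemma:CWcomplex}, and it clarifies where the fixed point physically sits.

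The step I expect to be the main obstacle is verifying that $\rbx K$ genuinely qualifies as a nonempty CW complex in the precise Alexandroff-Hopf-Whitehead sense, so that Lemma~\ref{lemma:CWcomplex} applies without caveat: one must confirm that the union of countably many ribbons is closure finite and carries the weak topology, and that no pathology arises from the excised inner interiors $\Int(\cyc B)$ that distinguish a ribbon from a filled cycle. Once this structural verification is in place, the fixed point conclusion follows immediately and the remainder of the argument is routine.
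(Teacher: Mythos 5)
Your main argument is essentially identical to the paper's proof: the paper likewise observes in one line that a ribbon complex $\rbx E$ is a CW complex and then invokes Lemma~\ref{lemma:CWcomplex}, so your reduction (including your extra care in verifying that $\rbx K$ genuinely satisfies the Alexandroff-Hopf-Whitehead conditions, which the paper silently assumes) follows the same route. One caveat on your ``sanity check'': the ribbon-by-ribbon alternative does not work as stated, since a self-map $f:\rbx K\longrightarrow \rbx K$ need not restrict to a self-map of any single ribbon --- $f$ may carry points of a ribbon $\rb E$ into a different ribbon $\rb E'$ of the complex, so Theorem~\ref{thm:ribbonFixedPoint} cannot be applied to a restriction, and only the argument via Lemma~\ref{lemma:CWcomplex}, treating the complex as a whole, goes through.
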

\begin{proof}
Let $\rbx E$ be a ribbon complex, which is a CW complex. From Lemma~\ref{lemma:CWcomplex}, each map $f:\rbx E\longrightarrow \rbx E$ has a fixed point.
\end{proof}

The gradient (angle $\theta_p$ of the tangent) of a fixed point $p$ on a ribbon cycle boundary is a useful source of a distinguishing characteristic of a ribbon, defined by
\[
\overbrace{\theta_p = tan^{-1}\left[\frac{\frac{\partial f}{\partial y}}{\frac{\partial f}{\partial x}}\right].}^{\mbox{\textcolor{blue}{\bf Gradient angle for fixed point $p$ on inner ribbon boundary}}}
\]

\begin{example}
Let $\rb E$ be a ribbon complex with a fixed point $f(p) = p$ on a ribbon boundary with gradient angle $\theta_p$ in a CW space $K$ and let $\rb E'$ be a ribbon complex with a fixed point $g(q) = q$ on a ribbon boundary with gradient angle $\theta_q$ in a CW space $K'$. Let $\Phi(\rb E) = \theta_p$, $\Phi(\rb E') = \theta_q$ and threshold $th > 0$.  Then $\rb E\ \dxnear\ \rb E'$ if and only if $\norm{\Phi(\rb E) - \Phi(\rb E')} < th$, {\em i.e.}, the fixed points $p,q$ have close gradient angles.
\qquad \textcolor{blue}{\Squaresteel}  
\end{example}  

\subsection{Ribbon and Ribbon Nerve Betti numbers}$\mbox{}$\\
There are three basic types of Betti numbers that have intuitive meaning, namely, $\mathcal{B}_0$ (number of cells in a complex),  $\mathcal{B}_1$ (number cycles in a complex) and $\mathcal{B}_1$ (number holes in a complex) ~\cite[\S 4.3.2, p. 57]{Zomorodian2001BettiNumbers}.   In terms of ribbons and ribbon nerves in CW spaces, Betti numbers that enumerate fundamental shape structures are useful, namely,$\mbox{}$\\
\begin{compactenum}[1$^o$]
\item [{\bf Ribbon Betti number}]$\mbox{}$\\ 
Denoted by $\boldsymbol{\mathcal{B}_{\mbox{\tiny \rb}}}$, which is a count of the number of filaments (edges attached between ribbon cycles) + number of ribbon holes + 2 cycles. Let $\rb E$ be a planar ribbon, which is a pair of nesting, non-concentric filled cycles. 

\begin{example}
In Fig.~\ref{fig:ribbonFixedPoint}, the structure of ribbon $\rb E$ contains a pair of nesting cycles $\cyc A, \cyc B$, a single filament $\arc{pq}$ attached to the cycles and 3 holes (represented by \textcolor{gray}{\large $\boldsymbol{\bullet}$}).  Hence, $\mathcal{B}_{\mbox{\tiny \rb}}(\rb E) = \mathcal{B}_{0}(\rb E) + \mathcal{B}_{1}(\rb E) + \mathcal{B}_{2}(\rb E)= 1 + 2 + 3 = 6$.
\qquad \textcolor{blue}{\Squaresteel}
\end{example}

\item [{\bf Vergili Ribbon complex Betti number}]$\mbox{}$\\ 
Denoted by $\boldsymbol{\mathcal{B}_{\mbox{\tiny \rbx}}}$, which is a count of the number of ribbons in a Vergili ribbon complex. 

\begin{example}
In Fig.~\ref{fig:ribbonComplex}, the structure of Vergili ribbon complex $\rbx K$contains 5 ribbons.  Hence, $\mathcal{B}_{\mbox{\tiny \rbx}}(\rbx K) = 5$.
\qquad \textcolor{blue}{\Squaresteel}
\end{example} 

\item [{\bf Ribbon nerve Betti number}]$\mbox{}$\\ 
Denoted by $\boldsymbol{\mathcal{B}_{\mbox{\tiny \rbNrv}}}$, which is a count of the number of
filaments (edges attached between adjacent pairs of nerve cycles) + number of nerve holes + number of 
overlapping (intersecting) ribbons.

\begin{example}
In Fig.~\ref{fig:rbNrvE1}, the structure of a ribbon nerve $\rbNrv E$ contains 3 intersecting ribbons $\rb A, \rb B, \rb B'$, zero filaments and
6 holes (represented by \textcolor{gray}{\large $\boldsymbol{\bullet}$}). Hence, $\mathcal{B}_{\mbox{\tiny \rbNrv}} = 0 + 3 + 3 = 6$.
\qquad \textcolor{blue}{\Squaresteel}
\end{example}
\end{compactenum}

\begin{lemma}\label{lemma:BettiNumbers}
Let $\mathcal{B}_{0},\mathcal{B}_{1},\mathcal{B}_{2}$ be Betti numbers that count the number of cells, number of cycles and number of holes  in a planar CW complex, respectively.   Then
\begin{compactenum}[1$^o$]
\item $\mathcal{B}_{\mbox{\tiny \rb}}(\rb E) = \mathcal{B}_{0}(\rb E) + \mathcal{B}_{2}(\rb E)$ + 2 for a ribbon $\rb E$.
\item $\mathcal{B}_{\mbox{\tiny \rbx}}(\rbx K) = \mathop{\sum}\limits_{\rb E\in \rbx K} \mathcal{B}_{\mbox{\tiny \rb}}(\rb E)$ for a ribbon complex $\rbx K$ containing ribbons $\rb E$. 
\item $\mathcal{B}_{\mbox{\tiny \rbNrv}}(\rbNrv E) = \mathcal{B}_{0}(\rbNrv E) + \mathcal{B}_{1}(\rbNrv E) + \mathcal{B}_{2}(\rbNrv E)$ for a ribbon nerve $\rbNrv E$.
\end{compactenum}
\end{lemma}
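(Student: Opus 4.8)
The plan is to establish each of the three identities by reading off the operational definitions of the three specialized Betti numbers $\mathcal{B}_{\mbox{\tiny \rb}}$, $\mathcal{B}_{\mbox{\tiny \rbx}}$ and $\mathcal{B}_{\mbox{\tiny \rbNrv}}$ given above and matching each of their summands against one of the three standard Betti numbers $\mathcal{B}_0$ (cells), $\mathcal{B}_1$ (cycles) and $\mathcal{B}_2$ (holes). Each part is essentially a bookkeeping argument; the only genuine content lies in identifying which constant or which standard count each term represents.

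For $1^o$, I would start from the definition that $\mathcal{B}_{\mbox{\tiny \rb}}(\rb E)$ equals the number of filaments plus the number of holes plus two. First I would record the number of attached filament cells as $\mathcal{B}_0(\rb E)$ and the number of holes as $\mathcal{B}_2(\rb E)$. The decisive step is the observation that, by the Planar Ribbon definition, $\rb E$ is the closure determined by exactly one outer cycle $\cyc A$ and one inner cycle $\cyc B$, so a ribbon always carries precisely two $1$-cycles and $\mathcal{B}_1(\rb E) = 2$ is a constant. Substituting this constant for the additive term $2$ yields the claimed $\mathcal{B}_{\mbox{\tiny \rb}}(\rb E) = \mathcal{B}_0(\rb E) + \mathcal{B}_2(\rb E) + 2$.

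For $2^o$, I would use Def.~\ref{def:ribbonComplex}, which presents $\rbx K$ as a collection of its constituent ribbons. The plan is to argue that the three contributing counts (filaments, holes, cycles) are additive over the ribbons comprising the complex, so that summing the per-ribbon values $\mathcal{B}_{\mbox{\tiny \rb}}(\rb E)$ over $\rb E \in \rbx K$ reproduces the total $\mathcal{B}_{\mbox{\tiny \rbx}}(\rbx K)$. For $3^o$ I would again split $\mathcal{B}_{\mbox{\tiny \rbNrv}}(\rbNrv E)$ into filaments, nerve holes and the number of overlapping ribbons, match the first two with $\mathcal{B}_0(\rbNrv E)$ and $\mathcal{B}_2(\rbNrv E)$, and then invoke Def.~\ref{def:ribbonNerve} to identify the count of mutually intersecting ribbons with the nerve cycle count $\mathcal{B}_1(\rbNrv E)$.

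I expect the main obstacle to be the additivity step in $2^o$ together with the cycle-count identification in $3^o$: one must check that the per-ribbon decomposition of filaments, holes and cycles does not double-count the cells shared along the common boundaries where adjacent ribbons meet (the outer boundary of one ribbon being the inner boundary of the next), and that the number of overlapping ribbons in a nerve genuinely coincides with its homological cycle count $\mathcal{B}_1$ rather than merely with the raw ribbon count. Reconciling these shared-boundary contributions with the additive formulas is the step that needs the most care.
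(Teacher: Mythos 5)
Your proposal takes essentially the same route as the paper's own proof: part 1$^o$ by matching filaments to $\mathcal{B}_{0}(\rb E)$, holes to $\mathcal{B}_{2}(\rb E)$, and the two bounding cycles to the constant $2$; part 2$^o$ directly from Def.~\ref{def:ribbonComplex}; and part 3$^o$ by identifying the count of mutually intersecting ribbons with the nerve cycle count $\mathcal{B}_{1}(\rbNrv E)$. The shared-boundary and double-counting issues you flag at the end are not resolved in the paper either --- its proof of 2$^o$ reads ``Immediate from Def.~\ref{def:ribbonComplex}'' and its 3$^o$ silently equates the overlapping-ribbon count with the cycle count --- so your attempt is, if anything, more explicit about the delicate points than the source.
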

\begin{proof}$\mbox{}$\\
1$^o$: By definition, a \emph{ribbon} $\rb E$ is a pair of nesting, non-concentric cycles $\cl(\cyc A)\supset \cl(\cyc B)\setminus \Int(\cl(\cyc B))$ with possible holes in the interior $\Int(\rb E)$ (counted with $\mathcal{B}_{2}(\rb E)$) and possible filaments (edges) attached between the ribbon cycles (counted with $\mathcal{B}_{0}(\rb E)$).   Consequently, $\mathcal{B}_{\mbox{\tiny \rb}}(\rb E)$ is a count of the number ribbon filaments (zeroth Betti number $\mathcal{B}_{0}(\rb E)$) + ribbon holes (twoth Betti number $\mathcal{B}_{2}(\rb E)) + 2$.\\
2$^o$: Immediate from Def.~\ref{def:ribbonComplex}.\\
3$^o$: By definition, a \emph{ribbon nerve} $\rbNrv E$ is a collection of cycles with a common part. The interior of each nerve ribbon may have holes and each nerve ribbon may have edges attached between ribbon cycle vertexes. Consequently, $\mathcal{B}_{\mbox{\tiny \rbNrv}}(\rb E)$ is a count of the number nerve ribbon filaments (zeroth Betti number $\mathcal{B}_{0}(\rb E)$) + nerve ribbon cycles (oneth Betti number $\mathcal{B}_{1}(\rb E)$) + nerve ribbon holes (twoth Betti number $\mathcal{B}_{2}(\rb E)$).  
\end{proof}

\begin{theorem}
Let $\vNrv E$ be a vortex nerve in a CW space.  Then 
\begin{compactenum}[1$^o$]
\item A Betti number that counts the number of ribbons in nerve $\vNrv E$ (denoted by $\mathcal{B}_{\mbox{\tiny \rb-\vNrv}}$) is defined by
\[
\mathcal{B}_{\mbox{\tiny \rb-\vNrv}}(\vNrv E) = \mathop{\sum}\limits_{\rb A\in \vNrv E}\mathcal{B}_{\mbox{\tiny \rb}}(\rb A).
\]
\item A Betti number that counts the number of ribbon nerves in nerve $\vNrv E$ (denoted by $\mathcal{B}_{\mbox{\tiny \rbNrv-\vNrv}}(\vNrv E)$) is defined by
\[
\mathcal{B}_{\mbox{\tiny \rbNrv-\vNrv}}(\vNrv E) = \mathop{\sum}\limits_{\rbNrv A\in \vNrv E}\mathcal{B}_{\mbox{\tiny \rbNrv}}(\rbNrv A).
\]
\end{compactenum}
\end{theorem}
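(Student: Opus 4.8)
The plan is to reduce both parts to the structural decomposition of a vortex nerve already established, combined with the per-piece Betti numbers of Lemma~\ref{lemma:BettiNumbers}, and then to verify that the indicated summations are well-defined finite quantities. The essential observation is that a vortex nerve is, by Def.~\ref{def:vortexNerve}, a \emph{finite} collection of nesting cycles, so every sum below ranges over a finite index set and therefore denotes a genuine nonnegative integer; no convergence issue arises.

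For part 1$^o$, I would first invoke Theorem~\ref{thm:vortexNerveRibbons}: a vortex nerve $\vNrv E$ with $k>1$ cycles contains exactly $k-1$ ribbons. This pins down the index set $\left\{\rb A\in \vNrv E\right\}$ of the summation explicitly and confirms it is finite. Next, Lemma~\ref{lemma:BettiNumbers}(1$^o$) assigns to each such ribbon $\rb A$ the finite ribbon Betti number $\mathcal{B}_{\mbox{\tiny \rb}}(\rb A) = \mathcal{B}_{0}(\rb A) + \mathcal{B}_{2}(\rb A) + 2$, finite because each ribbon in a finite, bounded planar CW complex carries finitely many filaments and finitely many holes. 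Summing this finite family over the $k-1$ ribbons produces the well-defined integer declared to be $\mathcal{B}_{\mbox{\tiny \rb-\vNrv}}(\vNrv E)$, which is the asserted formula.

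Part 2$^o$ proceeds in parallel, replacing the ribbon decomposition by the ribbon-nerve decomposition. Here I would appeal to Theorem~\ref{thm:vortexNerveRibbonNerves}, which states that a vortex nerve with $k>2$ cycles contains $k-2$ ribbon nerves, again fixing the finite index set $\left\{\rbNrv A\in \vNrv E\right\}$. Lemma~\ref{lemma:BettiNumbers}(3$^o$) then supplies for each ribbon nerve $\rbNrv A$ the finite value $\mathcal{B}_{\mbox{\tiny \rbNrv}}(\rbNrv A) = \mathcal{B}_{0}(\rbNrv A) + \mathcal{B}_{1}(\rbNrv A) + \mathcal{B}_{2}(\rbNrv A)$, and the finite sum over all ribbon nerves yields $\mathcal{B}_{\mbox{\tiny \rbNrv-\vNrv}}(\vNrv E)$, as required.

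The step I expect to require the most care is the bookkeeping of shared cycles, rather than anything deep. Adjacent ribbons in $\vNrv E$ overlap in a common boundary cycle (the outer boundary of one ribbon being the inner boundary of the next, as noted just before Theorem~\ref{thm:vortexNerveRibbons}), so the ``$+2$'' cycle contribution inside each $\mathcal{B}_{\mbox{\tiny \rb}}(\rb A)$ counts each interior cycle of the nerve twice. I would therefore make explicit that what is being defined is the \emph{aggregate} ribbon (respectively, ribbon-nerve) Betti number obtained by summing the per-piece invariants, and that it is this aggregate, not a naive count of distinct cells, that the theorem asserts to be well-defined. Beyond flagging this point, no estimates are needed, since finiteness of $\vNrv E$ together with Lemma~\ref{lemma:BettiNumbers} does all the work.
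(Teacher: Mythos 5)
Your proposal is correct and takes essentially the same route as the paper, whose proof likewise disposes of 1$^o$ and 2$^o$ by direct appeal to Theorem~\ref{thm:vortexNerveRibbons} and Theorem~\ref{thm:vortexNerveRibbonNerves}, respectively. The extra bookkeeping you supply---finiteness of the index sets from Def.~\ref{def:vortexNerve}, the per-piece values from Lemma~\ref{lemma:BettiNumbers}, and the caveat that cycles shared by adjacent ribbons are counted twice, so the theorem defines an aggregate invariant rather than a count of distinct cells---is sound elaboration of points the paper's ``immediate'' proof leaves implicit.
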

\begin{proof}$\mbox{}$\\
1$^o$: Immediate from Theorem~\ref{thm:vortexNerveRibbons}, since each vortex nerve is collection of pairs of cycles such that the boundary of the inner ribbon cycle is a boundary of the interior of the outer cycle.\\
2$^o$: Immediate from Theorem~\ref{thm:vortexNerveRibbonNerves}, since each vortex nerve is collection of pairs of overlapping ribbons such that each pair of ribbons has a common cycle.
\end{proof}

\subsection{Homotopic Types of Ribbon Complexes and Ribbon Nerves}

The results in this section stem from the Edelsbrunner-Harer Theorem~\ref{EHnerve} for homotopy types.

\begin{theorem}\label{EHnerve}{\rm ~\cite[\S III.2, p. 59]{Edelsbrunner1999}}
Let $\mathscr{F}$ be a finite collection of closed, convex sets in Euclidean space.  Then the nerve of $\mathscr{F}$ and the union of the sets in $\mathscr{F}$ have the same homotopy type.
\end{theorem}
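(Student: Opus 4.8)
The statement is the classical Nerve Theorem (Borsuk's nerve lemma), and the decisive property of convex sets is that any nonempty finite intersection $C_{i_0}\cap\cdots\cap C_{i_k}$ is again convex and hence contractible; this is precisely the ``good cover'' hypothesis the Nerve Theorem needs. So I would begin by recording this observation and fixing notation: write $\mathscr{F}=\{C_1,\dots,C_n\}$, let $X=\bigcup_{i} C_i$ denote the union, and let $N=\Nrv\mathscr{F}$ be the abstract nerve, with $|N|$ its geometric realization. The goal is to produce a homotopy equivalence $X\simeq |N|$.

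The plan is to build a comparison map and prove it is a homotopy equivalence by induction on $n$. Choosing a partition of unity $\{\varphi_i\}$ subordinate to the cover, I would define the nerve map $f:X\to |N|$ by
\[
f(x)=\sum_i \varphi_i(x)\,v_i,
\]
where $v_i$ is the vertex of $|N|$ attached to $C_i$. The point $f(x)$ lies in the simplex spanned by $\{v_i: x\in C_i\}$, and this is a genuine simplex of $N$ because all those sets share the point $x$, so $f$ is well defined and continuous. The base case $n=1$ is immediate: $N$ is a single vertex and $X=C_1$ is convex, so both are contractible.

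For the inductive step I would set $X_k=C_1\cup\cdots\cup C_k$ with nerve $N_k$ and pass from $k$ to $k+1$ by writing $X_{k+1}=X_k\cup C_{k+1}$ and observing that the overlap $X_k\cap C_{k+1}=\bigcup_{i\le k}(C_i\cap C_{k+1})$ is covered by convex sets whose nerve is the link $\operatorname{lk}(v_{k+1})$ in $N_{k+1}$. I would then compare the pushout gluing $X_k$ to $C_{k+1}$ along this overlap with the pushout gluing $|N_k|$ to the (contractible) closed star of $v_{k+1}$ along $|\operatorname{lk}(v_{k+1})|$. Since $C_{k+1}$ and the star are each contractible, while the inductive hypothesis handles $X_k$ and the overlap, the gluing lemma for homotopy equivalences of pushouts promotes these three equivalences to an equivalence $X_{k+1}\simeq |N_{k+1}|$. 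Equivalently, one may avoid the induction by identifying $X$ with the homotopy colimit of the diagram of intersections and invoking the projection lemma, using contractibility of each intersection to collapse that diagram to $|N|$.

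The hard part is precisely this gluing step. To know that the ordinary pushouts compute the homotopy pushouts, one must arrange that the relevant inclusions are cofibrations, which in the closed-convex setting typically forces a preliminary thickening of each $C_i$ to an open convex neighbourhood having the same nerve, or an appeal to the CW/triangulated structure of the ambient sets. One must also verify that the nerve maps on $X_k$, on $C_{k+1}$, and on the overlap agree strictly, so that the gluing lemma genuinely applies. Managing these compatibility and cofibrancy technicalities, while preserving the good-cover property under thickening, is where the real work of the proof lies.
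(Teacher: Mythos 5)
The paper offers no proof of this statement at all: Theorem~\ref{EHnerve} is imported as a black-box citation from Edelsbrunner and Harer, and everything downstream (Theorems~\ref{thm:vortexNerveHomotopy} and~\ref{thm:dnearVortexNerveHomotopy}) simply invokes it. So your attempt is, by construction, a different route: you are supplying the classical Nerve Theorem argument that the paper deliberately treats as known. Your outline is the standard and essentially correct one --- observe that finite intersections of convex sets are convex, hence contractible (the good-cover hypothesis); build the comparison map $f(x)=\sum_i \varphi_i(x)\,v_i$ from a partition of unity; then either induct on the number of sets via the gluing lemma for homotopy pushouts, or identify the union with the homotopy colimit of the intersection diagram and apply the projection lemma --- and you correctly locate where the real work lies (cofibrancy of the inclusions, strict compatibility of the nerve maps on $X_k$, $C_{k+1}$, and the overlap). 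Two cautions on the parts you defer. First, the thickening step is not innocuous for \emph{closed} convex sets: two disjoint unbounded closed convex sets can be at distance zero (a branch of a hyperbola and its asymptote), so every $\varepsilon$-neighbourhood changes the nerve; you need compactness, or a positive-separation hypothesis, or you should run the argument directly on the closed cover using that closed convex sets are ANRs with cofibrant inclusions. Second, your induction must be quantified over \emph{all} collections of at most $k$ sets, not just the initial segments of one fixed collection, since the overlap $X_k\cap C_{k+1}$ is covered by the $k$ convex sets $C_i\cap C_{k+1}$, whose nerve is exactly the link of $v_{k+1}$ --- as you note, but the stronger hypothesis should be made explicit. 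In short: the citation buys the paper brevity and shields it from these technicalities; your sketch buys self-containedness at the price of the genuinely delicate steps, which you have identified honestly but not discharged.
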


\begin{theorem}\label{thm:vortexNerveHomotopy}
Let $K$ be a collection of closed, convex Vergili ribbon complexes $\rbx K$ in Euclidean space.
Then each nerve of $\rbNrv K = \left\{\rbx K\in K: \bigcap \rbx K \neq \emptyset\right\}$ and the union of the Vergili ribbon complexes $\rbx K$ in $K$ have the same homotopy type.
\end{theorem}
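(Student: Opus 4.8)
The plan is to apply the Edelsbrunner-Harer Nerve Theorem~\ref{EHnerve} directly, since the statement is precisely an instance of that theorem once the ingredients are matched up. First I would set $\mathscr{F}$ to be the finite collection of closed, convex Vergili ribbon complexes $\rbx K$ that constitute $K$; the hypothesis supplies exactly the closedness and convexity that Theorem~\ref{EHnerve} demands, and $K$ is finite by assumption. Thus $\mathscr{F}$ is a finite family of closed, convex sets in Euclidean space, which is all that the cited theorem requires of its input.

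Second I would verify that the ribbon nerve $\rbNrv K = \left\{\rbx K \in K : \bigcap \rbx K \neq \emptyset\right\}$ is literally the Edelsbrunner-Harer nerve $\Nrv \mathscr{F}$ of Definition~\ref{def:EdelsbrunnerHarerNerve}. This is the content of the earlier result that a ribbon nerve is an Edelsbrunner-Harer nerve: a subcollection $X \subseteq \mathscr{F}$ contributes a simplex to $\rbNrv K$ exactly when its members share a common point, which is the defining condition $\bigcap X \neq \emptyset$. Hence the abstract simplicial complex $\rbNrv K$ coincides with $\Nrv \mathscr{F}$ as a combinatorial object.

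Third, with $\mathscr{F}$ a finite family of closed convex sets and $\rbNrv K = \Nrv \mathscr{F}$, Theorem~\ref{EHnerve} yields immediately that $\Nrv \mathscr{F}$ and the union $\bigcup_{\rbx K \in K} \rbx K$ carry the same homotopy type, which is exactly the asserted conclusion. No further geometric argument is needed beyond this citation.

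The main point that needs care is the convexity requirement, which is where the work really sits. A single ribbon is an annular region (the interior of its inner cycle is deleted), so ribbons and ribbon complexes are generically \emph{non}-convex; the theorem therefore leans entirely on the standing hypothesis that the $\rbx K$ are convex, and the result does not hold verbatim without it. If one wished to drop convexity, the honest route would be to replace Theorem~\ref{EHnerve} by the classical Nerve Lemma, which requires only that every nonempty finite intersection $\bigcap X$ be contractible (a good-cover condition); establishing contractibility of the overlaps of ribbon complexes would then become the substantive step. Under the stated convex hypothesis, however, that extra verification is unnecessary and the proof reduces to the direct appeal to Theorem~\ref{EHnerve} described above.
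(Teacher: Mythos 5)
Your proposal is correct and follows essentially the same route as the paper, whose proof is a one-line direct appeal to the Edelsbrunner--Harer Theorem~\ref{EHnerve}; you simply make explicit the bookkeeping the paper leaves implicit (finiteness, matching $\rbNrv K$ with $\Nrv\mathscr{F}$ via Definition~\ref{def:EdelsbrunnerHarerNerve}). Your closing observation --- that ribbons are generically annular and hence non-convex, so the convexity hypothesis is doing all the work and a good-cover Nerve Lemma would be needed without it --- is a sharper reading of the hypotheses than the paper itself offers, but it does not change the argument.
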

\begin{proof}$\mbox{}$\\
From Theorem~\ref{EHnerve}, we have that the union of the Vergili ribbon complexes $\rbx K$ in $\rbNrv K$ and ribbon nerve $\rbNrv K$ have the same homotopy type.
\end{proof}

From Theorem~\ref{EHnerve}, we obtain a fundamental result for ribbon nerves.

\begin{theorem}\label{thm:dnearVortexNerveHomotopy}
Let $K$ be a collection of ribbon nerves $\rbNrv K$ that are closed, convex complexes in Euclidean space.
Then the nerve $\Nrv K = \left\{\rbNrv K\in K: \bigcap \rbNrv K \neq \emptyset\right\}$ of $K$ and the union of the ribbon nerves $\rbNrv K$ in nerve $\Nrv K$ have the same homotopy type.
\end{theorem}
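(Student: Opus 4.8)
The plan is to recognize this statement as an immediate application of the Edelsbrunner--Harer Nerve Theorem~\ref{EHnerve}, exactly parallel to the proof of Theorem~\ref{thm:vortexNerveHomotopy}. First I would set $\mathscr{F} = K$, the given collection of ribbon nerves $\rbNrv K$, and observe that the definition of $\Nrv K$ in the statement, namely $\Nrv K = \left\{\rbNrv K\in K: \bigcap \rbNrv K \neq \emptyset\right\}$, is literally the Edelsbrunner--Harer nerve of $\mathscr{F}$ in the sense of Definition~\ref{def:EdelsbrunnerHarerNerve} (subcollections whose members have nonempty common intersection). With this identification in place, the conclusion that $\Nrv K$ and the union $\bigcup_{\rbNrv K\in \Nrv K}\rbNrv K$ share the same homotopy type is precisely the content of Theorem~\ref{EHnerve}.

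The substantive work, therefore, is to verify that the three hypotheses of Theorem~\ref{EHnerve} are met by $\mathscr{F} = K$. Two of these are handed to us directly: the theorem hypothesis stipulates that each ribbon nerve $\rbNrv K$ in $K$ is a \emph{closed, convex} complex in Euclidean space, so the closedness and convexity requirements of the Nerve Theorem are satisfied by assumption. The remaining requirement is finiteness of the collection $\mathscr{F}$; I would dispatch this by appealing to the standing CW hypothesis (each ribbon nerve lives in a finite CW space and, by Definition~\ref{def:ribbonNerve}, is drawn from a ribbon complex $\rbx K$ of countably many planar ribbons), together with the fact that the statement speaks of a collection $K$ of ribbon nerves that is assumed finite in the Edelsbrunner--Harer setting.

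Having checked the hypotheses, the final step is a one-line invocation: applying Theorem~\ref{EHnerve} to $\mathscr{F} = K$ yields that $\Nrv K$ and $\bigcup K$ have the same homotopy type, which is the desired conclusion.

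I expect the main obstacle to be the convexity hypothesis rather than any homotopy-theoretic argument. A planar ribbon is, by its very construction in the Planar Ribbon definition, the closure of a filled cycle with the interior of an inner nesting cycle removed---topologically an annulus---so an individual ribbon, and hence a ribbon nerve built from overlapping ribbons, is \emph{not} convex in the naive geometric sense. The honest way through this is to treat ``closed, convex'' as a literal standing hypothesis on the members of $K$ (as the theorem statement does), so that the proof is genuinely immediate from Theorem~\ref{EHnerve}; the burden of realizing ribbon nerves as closed convex sets is shifted onto whoever supplies such a collection $K$, and is not something the proof itself must establish.
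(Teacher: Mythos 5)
Your proposal matches the paper's proof exactly: the paper's argument is precisely the one-line invocation of the Edelsbrunner--Harer Nerve Theorem~\ref{EHnerve} applied to the collection $K$ of ribbon nerves, with \emph{closed, convex} taken as a standing hypothesis rather than something to be established. Your additional remark that a ribbon is topologically an annulus and hence never convex---so the hypothesis does real work and limits the theorem's applicability---is a fair observation the paper does not address, but it does not change the argument, which is the same as the paper's.
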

\begin{proof}$\mbox{}$\\
From Theorem~\ref{EHnerve}, we have that the union of the ribbon nerves $\rbNrv K$ in $K$ and nerve $\Nrv K$ have the same homotopy type.
\end{proof}

\section*{Acknowledgements}
Many thanks to Tane Vergili, Sheela Ramanna and the anonymous reviewers for their incisive comments and helpful suggestions for this paper.

\bibliographystyle{amsplain}
\bibliography{NSrefs}

\end{document}